\pdfoutput=1
\RequirePackage{ifpdf}
\ifpdf 
\documentclass[pdftex]{sigma}
\else
\documentclass{sigma}
\fi

\numberwithin{equation}{section}

\newtheorem{Theorem}{Theorem}[section]
\newtheorem{Corollary}[Theorem]{Corollary}
\newtheorem{Lemma}[Theorem]{Lemma}
\newtheorem{Proposition}[Theorem]{Proposition}
{ \theoremstyle{definition}
	\newtheorem{Definition}[Theorem]{Definition}
	
	\newtheorem{Example}[Theorem]{Example}
	\newtheorem{Remark}[Theorem]{Remark} }

\usepackage[all]{xy}

\def\NN{\ensuremath{\mathbb{N}}}

\def\RR{\ensuremath{\mathbb{R}}}
\def\ZZ{\ensuremath{\mathbb{Z}}}

\DeclareMathOperator{\id}{id}

\newcommand{\boundary}[1]{\partial#1}


\newcommand{\disjointunion}{\amalg}

\DeclareMathOperator{\Hom}{Hom} 



\newcommand{\bigdisjointunion}{\bigsqcup}
\DeclareMathOperator{\Orb}{Orb}
\DeclareMathOperator{\GTOP}{G-TOP}
\newcommand{\forget}[1]{}

\begin{document}

\allowdisplaybreaks

\newcommand{\arXivNumber}{2412.07955}

\renewcommand{\PaperNumber}{093}

\FirstPageHeading

\ShortArticleName{Stolz Positive Scalar Curvature Structure Groups}

\ArticleName{Stolz Positive Scalar Curvature Structure Groups,\\ Proper Actions and Equivariant 2-Types}

\Author{Massimiliano PUGLISI~$^{\rm a}$, Thomas SCHICK~$^{\rm b}$ and Vito Felice ZENOBI~$^{\rm c}$}

\AuthorNameForHeading{T.~Schick, M.~Puglisi and V.F.~Zenobi}

\Address{$^{\rm a)}$~Dipartimento di Matematica, Sapienza Universit\`a di Roma, Italy}
\EmailD{\mail{puglisi1994@gmail.com}}

\Address{$^{\rm b)}$~Mathematisches Institut, Universit\"at G\"ottingen, Germany}
\EmailD{\mail{thomas.schick@math.uni-goettingen.de}}
\URLaddressD{\url{http://www.uni-math.gwdg.de/schick/}}

\Address{$^{\rm c)}$~Istituto Nazionale di Alta Matematica, Piazzale Aldo Moro 5, 00185 Roma, Italy}
\EmailD{\mail{zenobi@altamatematica.it}}

\ArticleDates{Received February 11, 2025, in final form October 19, 2025; Published online October 30, 2025}

\Abstract{In this note, we study equivariant versions of Stolz' $R$-groups, the positive scalar 	curvature structure groups $R^{\rm spin}_n(X)^G$, for proper actions of discrete groups~$G$. We define the concept of a fundamental groupoid functor for a $G$-space, encapsulating all the fundamental group information of all the fixed point sets and their relations. We construct classifying spaces for fundamental groupoid functors. As a geometric result, we show that Stolz' equivariant $R$-group $R^{\rm spin}_n(X)^G$ depends only on the fundamental groupoid functor of the reference space~$X$. The proof covers at the same time in a concise and clear way the classical non-equivariant case.}

\Keywords{positive scalar curvature; universal space for proper actions; spin bordism; fun\-da\-mental groupoid}

\Classification{57R91; 57R90; 53C27; 53C21}

\section{Introduction}

In \cite{Stolz}, Stolz organized the classification problem of metrics with positive scalar curvature in a~long exact sequence
\begin{equation}
	\label{eq:Stolz}
	\cdots \to R^{\rm spin}_{n+1}(X) \to \mathrm{Pos}^{\rm spin}_n(X) \to \Omega^{\rm spin}_n(X) \to R^{\rm spin}_n(X) \to \cdots
\end{equation}
of cobordism groups, where $X$ is a CW-complex. This long exact sequence
includes the well-known spin cobordism $\Omega_*^{\rm spin}(X)$; then the
structure bordism group of metrics of positive scalar curvature
$\mathrm{Pos}_*^{\rm spin}(X)$, whose cycles $(f\colon M\to X,g)$ are defined like
those of $\Omega_*^{\rm spin}(X)$, but adding the geometric secondary structure
given by a positive scalar curvature metric $g$ on the smooth compact manifold
$M$ with spin structure; finally, the object of study of this note,
$ R^{\rm spin}_*(X)$ which intuitively arises as a sort of mapping cone construction of the forgetful
functor \[\mathrm{Pos}^{\rm spin}_*(X) \to \Omega^{\rm spin}_*(X).\]

An explicit calculation of this last group has not yet been achieved in any case of
interest. Nonetheless, Stolz has proved that, if $X=B\pi_1(M)$ is the
classifying space of the fundamental group of a closed spin manifold $M$ with
$\dim(M)\ge 5$, then
$ R^{\rm spin}_{\dim M+1}(X)$ acts freely and transitively on the space of
concordance classes of positive scalar curvature metrics on $M$. This action
is canonical and hence the space of concordance classes
inherits canonically (after the choice of a base point) an abelian group structure and
a construction of an explicit model for this is given in \cite[Section 4]{XieYuZeidler}. This has been used, for
instance, in \cite{PSZ2,PSZ,SZ,XieYuZeidler} to give a lower bound on the rank
of this affine group and of the moduli space of concordance classes of
metrics of positive scalar curvature, where moduli space refers to the quotient by the action of the
diffeomorphism group of $M$.

A fundamental step in \cite{SZ} uses the fact, which can be implicitly deduced
from \cite{Stolz}, that a~2-connected map between CW-complexes, such as the
classifying map $u\colon M\to B\pi_1(M)$, induces an isomorphism between
corresponding
$ R^{\rm spin}_*$ groups for $*\ge 6$.

There is an obvious equivariant reformulation of this story: instead of
working with a space~$X$ one can work with the a $G$-cover $\bar X$ with its free
$G$-action. One can then go equivalently back and forth between structure on
$X$ and $G$-invariant structure on~$\bar X$ (pulling back from~$X$ to~$\bar X$
and quotienting by $G$ from~$\bar X$ to~$X$). This way, for a space $X$ with
fundamental group $G$ and universal cover $\tilde{X}$, one can replace equivalently the Stolz sequence
\eqref{eq:Stolz} by its equivariant version
\[
	\cdots \to R^{\rm spin}_{n+1}\bigl(\tilde X\bigr)^G \to
	\mathrm{Pos}^{\rm spin}_n\bigl(\tilde X\bigr)^G \to \Omega^{\rm spin}_n\bigl(\tilde X\bigr)^G
	\to R^{\rm spin}_n\bigl(\tilde X\bigr)^G \to \cdots,
\]
where all the cycles are defined as above, by requiring to come with a free and
co-compact and (whenever we have metrics) isometric action of $G$, and all maps
are required to be $G$-equivariant. This equivalent reformulation is important to get information about
the groups in the Stolz sequence via higher index theory, which has been
successfully implemented in \cite{PiazzaSchick} and~\cite{Zeidler}, compare
also \cite{XieYu} and the survey \cite{SchickICM}. The proofs of lower bounds
on the rank of the affine group
of concordance classes of positive scalar curvature are based on these techniques.

The aim of this note is two-fold. The first main contribution is an explicit
and
concise proof of the fact that (non-equivariantly) the group $R^{\rm spin}_*(X)$
depends for $*\ge 6$ only on the
fundamental group information of $X$. Secondly, we want to analyze the
corresponding statement for the case of general
proper $G$-actions. Here, we develop basic tools for this
generalization and then study it in the equivariant context of a CW-complex endowed with a proper
action of a discrete group. Concretely, we construct an equivariant
version of the Stolz exact sequence, compare Proposition~\ref{prop:Stolz_seq}.
One contribution is to give a complete proof of exactness, as we are not aware
that this is available in the literature. Our main original result is the fact
that the equivariant $R$-groups do only depend on the equivariant 2-type of the
space, compare Theorem~\ref{G2conn}. Along the way, we construct an
equivariant analog of the space $B\pi_1(X)$, namely a ``universal space for a
given 2-type'', compare Section \ref{sec:universal_space}.

It should be noted that this relies on surgery constructions requiring
enough ``room''. This is the reason for the dimension restrictions listed
above: the manifolds whose positive scalar curvature metrics are controlled
have to be spin manifolds of dimension at least $5$, and the group
$R^{\rm spin}_*(X)^G$ can be treated efficiently if $*\ge 6$.

\section{Proper actions of discrete groups}

Let us fix throughout a discrete group $G$.

\begin{Definition}
	A $G$-CW-complex X is a $G$-space together
	with a $G$-invariant filtration
	\[
\varnothing = X^{(-1)}\subseteq X^{(0)} \subseteq X^{(1)} \subseteq \dots \subseteq X^{(n)}\subseteq \dots \subseteq\bigcup_{n\geq0}X^{(n)}=X
\]
	such that $X$ carries the colimit topology with respect to this filtration and for each $n\geq0$
	the space $X^{(n)}$ is obtained from $X^{(n-1)}$ by attaching equivariant $n$-dimensional
	cells, i.e., there exists a $G$-pushout
	\[
	\xymatrix{\displaystyle\bigsqcup_{i\in I_n}G/H_i\times S^{n-1}\ar[r]\ar[d]& X^{(n-1)}\ar[d]\\
		\displaystyle\bigsqcup_{i\in I_n}G/H_i\times D^n\ar[r]& X^{(n)}.}
	\]
	
	Note that the $G$-CW-complex $X$ defines its \emph{isotropy family}
	$\mathcal{I}(X)$ of subgroups of $G$ where~$H$ belongs to $\mathcal{I}(X)$ if and
	only if the fixed point set $X^H$ is non-empty or in other words if and only
	if $X$ contains a $G$-cell $G/K\times D^n$ where a conjugate of $H$ is
	contained in $K$.
\end{Definition}

We recall the concept of a family of subgroups which we just have used.

\begin{Definition}
	Let $G$ be a discrete group. A \emph{family of subgroups $\mathcal{F}$} is a
	set of subgroup of $G$ which is closed under conjugation and is closed under passing
	to smaller subgroups.
	
	Significant examples of such families of subgroups are $\mathcal{FIN}$, the family of all finite subgroups, or
	$\mathcal{ALL}$, the family of all subgroups.
\end{Definition}

Let us fix some notation.
\begin{Definition}
	Let $f\colon X \rightarrow Y$ be a continuous $G$-equivariant map between
	$G$-spaces. We will denote by $f^H\colon X^H\to Y^H$ the restriction of $f$
	to the $H$-fixed point sets, with $H$ a subgroup of~$G$.
\end{Definition}

\begin{Definition}
	We say that $f$ is cellular if $X$ and $Y$ are $G$-CW-complexes and, denoting by~$X^{(k)}$ the \textit{k-skeleton} of X, one has $f\bigl(X^{(k)}\bigr) \subseteq Y^{(k)}$.
\end{Definition}
The well-known and important cellular approximation theorem extends to the equivariant context (compare \cite[Theorem~2.1]{ttd}).

\begin{Theorem} \label{cellular}
	Let $f\colon X\to Y$ be a $G$-map. Then there exists a G-homotopy $h\colon X\times I\to Y$ such that $h_0:= h_{X\times\{0\}}=f$ and $h_1:= h_{X\times\{1\}}$ is cellular.
\end{Theorem}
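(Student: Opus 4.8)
The plan is to induct over the equivariant skeletal filtration of $X$, reducing at each stage to the classical non-equivariant cellular approximation theorem applied to fixed-point sets, and finally to telescope the resulting homotopies into a single $G$-homotopy using the colimit topology on $X$.

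The key reduction is the following. Since $G$ acts only on the orbit factor of an equivariant cell, a $G$-map out of $G/H\times D^n$ is the same datum as an ordinary map $\phi\colon D^n\to Y^H$, namely the restriction to $\{eH\}\times D^n$, whose image lies in $Y^H$ because $eH$ is $H$-fixed; the analogous statement holds for $G/H\times S^{n-1}$. Moreover, for any subgroup $H$ the fixed-point set $Y^H$ is itself an ordinary CW-complex whose $k$-skeleton is $\bigl(Y^{(k)}\bigr)^H$: the fixed points of a cell $G/K\times D^m$ are $(G/K)^H\times D^m$, a disjoint union of $m$-cells, so $\bigl(Y^H\bigr)^{(n)}=\bigl(Y^{(n)}\bigr)^H$. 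Consequently $f$ is cellular exactly when each restriction $f^H$ is cellular in the ordinary sense, and cellularity of a $G$-map on a cell translates into cellularity of the associated fixed-point map $\phi$.

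With this in hand I set up the induction. Assume $f$ has already been $G$-homotoped so that $f\bigl(X^{(n-1)}\bigr)\subseteq Y^{(n-1)}$, and consider the $G$-pushout attaching the $n$-cells $\bigsqcup_{i} G/H_i\times D^n$. By the inductive hypothesis each attaching map sends $G/H_i\times S^{n-1}$ into $Y^{(n-1)}$, so the reduction above yields ordinary maps $\phi_i\colon D^n\to Y^{H_i}$ with $\phi_i\bigl(S^{n-1}\bigr)\subseteq \bigl(Y^{H_i}\bigr)^{(n-1)}$. Applying the classical cellular approximation theorem \emph{rel boundary} to each $\phi_i$ in the CW-complex $Y^{H_i}$ produces a homotopy, stationary on $S^{n-1}$, to a cellular map landing in $\bigl(Y^{H_i}\bigr)^{(n)}=\bigl(Y^{(n)}\bigr)^{H_i}$. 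Because these homotopies are rel boundary, translating them back through the reduction gives a $G$-homotopy on each $G$-cell stationary on $G/H_i\times S^{n-1}$; these glue over the pushout into a $G$-homotopy on $X^{(n)}$ that is stationary on $X^{(n-1)}$. Invoking the equivariant homotopy extension property of the $G$-CW-pair $\bigl(X,X^{(n)}\bigr)$, I extend it to a $G$-homotopy of $f$ on all of $X$, after which $f\bigl(X^{(n)}\bigr)\subseteq Y^{(n)}$ while $f|_{X^{(n-1)}}$ is unchanged.

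Finally the stagewise homotopies must be assembled into one. Performing the $n$-th stage on the time subinterval $\bigl[1-2^{-n},1-2^{-(n+1)}\bigr]$ and keeping it stationary on $X^{(n-1)}$, the concatenation stabilizes on each skeleton after finitely many stages, so by the colimit topology on $X$ it defines a continuous $G$-homotopy $h\colon X\times I\to Y$ with $h_0=f$ and $h_1$ cellular. The main obstacle is precisely ensuring compatibility across the three gluings---over orbits, over cells, and over skeleta---which is exactly why the rel-boundary strengthening of the classical theorem, the equivariant homotopy extension property, and the telescoping argument are all needed; the equivariance itself causes no trouble once everything is phrased through the fixed-point maps $f^H$.
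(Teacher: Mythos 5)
The paper does not actually prove this statement; it quotes it from tom Dieck \cite[Theorem~2.1]{ttd}, so there is no in-paper argument to compare against. Your proof is the standard textbook argument and is essentially correct: the adjunction identifying $G$-maps out of $G/H\times D^n$ with ordinary maps $D^n\to Y^H$, the fact that $\bigl(Y^H\bigr)^{(k)}=\bigl(Y^{(k)}\bigr)^H$ because $(G/K)^H$ is discrete, the skeleton-wise induction combined with the equivariant homotopy extension property of the pair $\bigl(X,X^{(n)}\bigr)$, and the telescoping of the stage homotopies over $\bigl[1-2^{-n},1-2^{-(n+1)}\bigr]$ using the colimit topology are exactly the ingredients one finds in the references. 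One step is mislabeled, though: you cannot literally invoke ``classical cellular approximation rel boundary'' for $\phi_i\colon D^n\to Y^{H_i}$, because that theorem requires $\phi_i|_{S^{n-1}}$ to be cellular, whereas your inductive hypothesis only gives $\phi_i\bigl(S^{n-1}\bigr)\subseteq \bigl(Y^{H_i}\bigr)^{(n-1)}$, i.e., skeletal in the top degree but not necessarily cellular with respect to the lower skeleta of whatever CW structure you place on $S^{n-1}$. What you actually need---and what is true---is the compression lemma: for any CW complex $Z$ the pair $\bigl(Z,Z^{(n)}\bigr)$ is $n$-connected, so any map of pairs $\bigl(D^n,S^{n-1}\bigr)\to\bigl(Z,Z^{(n)}\bigr)$ is homotopic rel $S^{n-1}$ to a map with image in $Z^{(n)}$. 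With that substitution the inductive step closes, and since the paper's definition of ``cellular'' is precisely $f\bigl(X^{(k)}\bigr)\subseteq Y^{(k)}$ for all $k$, the limit map $h_1$ is cellular as required.
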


We also have an equivariant version of the Whitehead Theorem for $G$-CW-complexes.

\begin{Definition}\label{def:connectedness}
	Consider a function $\nu\colon \mathcal{ALL}\to\mathbb{N}$. Then we say that
	$f$ is~$\nu$-connected if~$f^H$ is~$\nu(H)$-connected for all $H\in
	\mathcal{F}$, namely the induced maps are isomorphisms on the first
	$\nu(H)-1$ homotopy groups of (all components of) $X^H$ and $Y^H$ and a
	surjection on the $\nu(H)$-th one.
	In particular, we say that it is {$k$-connected} if $\nu$ is constantly equal to $k$.
	
	Moreover, we say that a relative $G$-CW-complex $(X,A)$ has dimension less or equal to $\nu$ if the cells in $X\setminus A$ are of the form $G/H\times D^k$ with $k\leq\nu(H)$.
\end{Definition}
\begin{Proposition}[{compare \cite[Proposition~2.6]{ttd}}] \label{whitehead}
	Let $f\colon B\to C$ be a $\nu$-connected map between $G$-CW-complexes and $A$
	another
	$G$-CW-complex. Write $[A,B]^G$ for the set of $G$-homotopy classes of
	$G$-maps from $A$ to $B$. Then
	\begin{equation*}
		f_*\colon\ [A,B]^G\to [A,C]^G
	\end{equation*}
	is surjective $($or
	bijective, respectively$)$ if $\dim A\leq\nu$ $($or $\dim A<\nu$, respectively$)$.
\end{Proposition}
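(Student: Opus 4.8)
The plan is to prove both statements by induction over the equivariant cells of $A$, reducing each elementary extension problem to a classical non-equivariant lifting problem on fixed point sets. The mechanism driving this reduction is the orbit adjunction: for any $G$-space $Y$ and any space $Z$ there is a natural bijection between $G$-maps $G/H\times Z\to Y$ and ordinary maps $Z\to Y^H$, since a $G$-map out of $G/H\times Z$ is determined by the images of the points $(eH,z)$, which necessarily lie in $Y^H$. Under this identification, extending a $G$-map over an equivariant cell $G/H\times D^k$ relative to its boundary $G/H\times S^{k-1}$ becomes the problem of extending an ordinary map of the pair $(D^k,S^{k-1})$ into $B^H$, compatibly with a prescribed map into $C^H$.

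For surjectivity I would assume $\dim A\leq\nu$, take a $G$-map $g\colon A\to C$, and construct a lift $\tilde g\colon A\to B$ with $f\circ\tilde g$ $G$-homotopic to $g$ by building it over the skeleta $A^{(k)}$ one at a time, starting from $A^{(-1)}=\varnothing$. Given a lift on $A^{(k-1)}$, the $k$-skeleton arises from the defining $G$-pushout by attaching cells $G/H_i\times D^k$ along $G/H_i\times S^{k-1}$, so by the pushout property it suffices to extend over each such cell. Via the adjunction this is the non-equivariant problem of lifting a map $D^k\to C^{H_i}$ through $f^{H_i}$, given a lift already present on $S^{k-1}$. Since $f$ is $\nu$-connected, $f^{H_i}$ is $\nu(H_i)$-connected, and because the cell has dimension $k\leq\nu(H_i)$ the obstruction to this extension lives in the relative homotopy group $\pi_k$ of the mapping cylinder of $f^{H_i}$ modulo $B^{H_i}$, which vanishes through degree $\nu(H_i)$. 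Assembling these extensions $G$-equivariantly over all $i$ and passing to the colimit over skeleta yields $\tilde g$.

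For injectivity I would reduce to the surjectivity case applied to a product with an interval. Assuming $\dim A<\nu$, suppose two $G$-maps $\tilde g_0,\tilde g_1\colon A\to B$ satisfy $f\circ\tilde g_0\simeq_G f\circ\tilde g_1$; a chosen $G$-homotopy furnishes a $G$-map $A\times I\to C$ whose restriction to $A\times\partial I$ already lifts through $f$ via $\tilde g_0$ and $\tilde g_1$. The relative $G$-CW-complex $(A\times I,A\times\partial I)$ has its cells shifted up by one dimension: each cell $G/H\times D^k$ of $A$ contributes a cell $G/H\times D^{k+1}$, and $k+1\leq\nu(H)$ precisely because $k<\nu(H)$. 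Thus $(A\times I,A\times\partial I)$ has dimension $\leq\nu$, and the relative form of the surjectivity argument extends the lift over all of $A\times I$, producing the desired $G$-homotopy $\tilde g_0\simeq_G\tilde g_1$. Together with surjectivity this gives bijectivity.

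The conceptual content is classical, so the main obstacle I anticipate is the bookkeeping on fixed point sets rather than any single hard step. The spaces $B^H$ and $C^H$ are generally disconnected, so $\nu(H)$-connectedness must be read componentwise and for all basepoints, and the vanishing of each obstruction must be verified in the specific component into which the relevant attaching map lands; some care is likewise needed because $\nu$ is prescribed on all subgroups while the hypothesis constrains a family. Cellular approximation (Theorem~\ref{cellular}) can be invoked to put $g$ and the connecting homotopies into cellular form so that the skeletal induction is well defined. Once this component-and-basepoint accounting is arranged, each individual extension step is exactly the standard relative lifting lemma of obstruction theory applied on fixed points.
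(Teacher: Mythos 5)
Your proof is correct and is exactly the standard skeletal-induction argument via the orbit adjunction $\mathrm{map}^G(G/H\times D^k,Y)\cong\mathrm{map}\bigl(D^k,Y^H\bigr)$, reducing each cell extension to the vanishing of $\pi_k$ of the mapping cylinder of $f^H$ relative to $B^H$ for $k\le\nu(H)$, with the product-with-interval trick for injectivity. The paper gives no proof of its own and simply cites tom Dieck's Proposition 2.6 (and its relative precursor Proposition 2.5), which proceeds in the same way, so your argument matches the intended one.
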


Note that the classical Whitehead theorem is a consequence, using $A=C$
and $A=B$ and identity maps.

\section{The Stolz exact sequence}
	
\begin{Definition}\label{def:Stolz_groups}
		Let $X$ be a $G$-CW-complex. We then define the following groups:		
		\begin{itemize}\itemsep=0pt
			\item $\Omega^{\rm spin}_{n}(X)^{G}$ is the $G$-equivariant \textit{spin
				bordism} group: a cycle here is given by a pair $(M,f)$, where $M$ is an
			$n$-dimensional spin-manifold with cocompact spin-structure
			preserving\footnote{That is, with a lift of the action to the Spin principal
				bundle.} action of $G$ and with a $G$-equivariant
			reference map $f\colon M\to X$. Two cycles $(M,f)$ and~$\bigl(M',f'\bigr)$ are
			equivalent if there is a cocompact spin $G$-bordism $W$ from $M$ to $M'$
			and there exists a $G$-equivariant reference map $F\colon W\to X$
			extending $f$ and $f'$.
			\item $\mathrm{Pos}^{\rm spin}_{n}(X)^{G}$ consists of cycles $(M,f,g)$, where the pair
			$(M,f)$ is as before and $g$ is a~$G$-invariant metric with positive
			scalar curvature on $M$. Two such cycles $(M,f,g)$ and~$\bigl(M',f',g'\bigr)$ are
			equivalent if there exists a~spin bordism $(W,F)$ as before, along with a~$G$-invariant metric $g_W$ on $W$ which is of product type near the
			boundary which restricts to $g$ on $M$ and to $g'$ on $M'$.
			\item $R^{\rm spin}_{n}(X)^{G}$ is the bordism group of spin $G$-manifolds with
			boundary (possibly empty) of dimension $n$, together with a $G$-invariant
			Riemannian metric of positive scalar curvature on the boundary. Bordisms are then manifolds with
			corners. In particular, $(M,f,g)$ and $\bigl(M',f',g'\bigr)$ are equivalent if there
			exists a $G$-bordism $(W,F,\bar{g})$, where $W$ is a bordism between $M$
			and $M'$ and the resulting bordism $\partial_0 W$ between $\partial M$ and
			$\partial M'$ carries a $G$-invariant
			metric $\bar{g}$ with positive scalar curvature, so that it is a
			bordism between~${\bigl(\partial M,\partial f,g\bigr)}$ and
			$\bigl(\partial M',\partial f',g'\bigr)$ in the sense of $\mathrm{Pos}^{\rm spin}_{n-1}(X)^{G}$.
		\end{itemize}
		
		Each of these sets is equipped with an abelian group structure given by
		disjoint union of manifolds and is covariantly functorial in $X$ as follows: a $G$-equivariant map of $G$-CW-Complexes~${\varphi\colon X \rightarrow Y}$ induces a mapping $\varphi_{*}$ on these groups
		by post-composing the reference maps with~$\varphi$.
	\end{Definition}
	
	\begin{Remark}
		Note that for each cycle $f\colon M\to X$ in $\Omega_n^{\rm spin}(X)^G$ the
		isotropy of $M$ is restricted to belong to the family $\mathcal{I}(X)$
		(because the image under the equivariant map $f$ of a point $x\in M$ fixed by a subgroup $H$ of
		$G$ must also be fixed by $H$, and is a point in $X$).
		
		If we want to restrict the isotropy even further, to live in a family
		$\mathcal{F}$ of subgroups of $G$, we can replace the space $X$ by $X\times
		E_{\mathcal{F}}G$ where $E_{\mathcal{F}}G$ is the universal $G$-CW-complex
		with isotropy family $\mathcal{F}$. This space is characterized by the property
		that $E_{\mathcal{F}}G^H$ is empty if $H\notin \mathcal{F}$ and~$E_{\mathcal{F}}G^H$ is contractible if $H\in\mathcal{F}$, in particular $\mathcal{I}(E_{\mathcal{F}}G)=\mathcal{F}$. It exists for
		each family $\mathcal{F}$ and it is unique up to $G$-equivariant homotopy
		equivalence. It has the universal property that a~$G$-CW-complex~$X$ with
		isotropy contained in $\mathcal{F}$ has a unique homotopy class of $G$-maps
		to~$E_{\mathcal{F}}G$. These spaces were introduced and studied in \cite{tomDieck,ttd}.
	\end{Remark}

\begin{Proposition}\label{prop:Stolz_seq}
		The abelian groups defined in Definition~{\rm\ref{def:Stolz_groups}} fit into the
		following $G$-equivari\-ant version of the Stolz positive scalar curvature exact sequence:
		\[
			\cdots \to R^{\rm spin}_{n+1}(X)^{G} \to \mathrm{Pos}^{\rm spin}_{n}(X)^{G} \to \Omega^{\rm spin}_{n}(X)^{G} \to R^{\rm spin}_{n}(X)^{G} \to \cdots,
		\]
		where the first map sends a manifold to its boundary, the second one
		is the forgetful map $($i.e., it forgets the metric of positive scalar curvature$)$
		and the last one considers a closed manifold as a~manifold with empty boundary.
	\end{Proposition}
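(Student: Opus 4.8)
The plan is to verify first that the three maps are well-defined group homomorphisms and then to prove exactness at each of the three positions. In each case the inclusion $\im\subseteq\ker$ (equivalently, that consecutive composites vanish) is immediate from the geometric descriptions, so the real content is the reverse inclusion $\ker\subseteq\im$, which I would establish by an explicit bordism construction. Throughout I abbreviate the boundary map $R^{\rm spin}_{n+1}(X)^G\to\mathrm{Pos}^{\rm spin}_n(X)^G$ by $\partial$, the forgetful map $\mathrm{Pos}^{\rm spin}_n(X)^G\to\Omega^{\rm spin}_n(X)^G$ by $\phi$, and the map $\Omega^{\rm spin}_n(X)^G\to R^{\rm spin}_n(X)^G$ that regards a closed manifold as having empty boundary by $j$. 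That all three are additive homomorphisms independent of the chosen representative follows formally from the disjoint-union group structure of Definition~\ref{def:Stolz_groups}.

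For the vanishing of composites: $\phi\circ\partial=0$ because the boundary of the bordism $W$ underlying a cycle in $R^{\rm spin}_{n+1}(X)^G$ bounds, hence is null in $\Omega^{\rm spin}_n(X)^G$; $\partial\circ j=0$ because a closed manifold has empty boundary; and $j\circ\phi=0$ because, given $(M,f,g)\in\mathrm{Pos}^{\rm spin}_n(X)^G$, the cylinder $M\times[0,1]$, viewed as a manifold with corners whose only end is $M\times\{0\}$ and whose lateral boundary $M\times\{1\}$ carries the positive scalar curvature metric $g$, is an $R^{\rm spin}_n(X)^G$-null-bordism of $(M,f,\varnothing)$.

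Next I would treat the two easier exactness points. At $\mathrm{Pos}^{\rm spin}_n(X)^G$: if $\phi(M,f,g)=[M,f]=0$, choose a spin $G$-null-bordism $(V,H)$ of $(M,f)$; then $(V,H,g)$ is a legitimate cycle in $R^{\rm spin}_{n+1}(X)^G$, since $g$ is a positive scalar curvature metric on $\partial V=M$, and $\partial(V,H,g)=(M,f,g)$, so $(M,f,g)\in\im\partial$. At $\Omega^{\rm spin}_n(X)^G$: if $j[M,f]=(M,f,\varnothing)=0$, an $R^{\rm spin}_n(X)^G$-null-bordism gives an $(n+1)$-manifold $W$ with $\partial W=M\sqcup\partial_0W$, where $\partial_0W$ is closed and carries a positive scalar curvature metric $\bar g$; then $W$ witnesses $[M,f]=[\partial_0W,H|]$ in $\Omega^{\rm spin}_n(X)^G$, whence $[M,f]=\phi[\partial_0W,H|,\bar g]\in\im\phi$. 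The most involved point is exactness at $R^{\rm spin}_n(X)^G$. Given $(N,h,g_N)$ with $\partial(N,h,g_N)=(\partial N,h|,g_N)=0$ in $\mathrm{Pos}^{\rm spin}_{n-1}(X)^G$, choose a $\mathrm{Pos}$-null-bordism $(V,H,g_V)$, so that $V$ is an $n$-manifold with $\partial V=\partial N$ and $g_V$ is a positive scalar curvature metric on $V$ restricting to $g_N$. Cap off by setting $N':=N\cup_{\partial N}V$, a closed $n$-manifold with reference map $h'=h\cup H$. Then $W:=N'\times[0,1]$, regarded as a manifold with corners whose ends are $N'\times\{0\}=N'$ and $N\times\{1\}=N$ and whose lateral boundary is $\partial_0W=V\times\{1\}=V$ equipped with $g_V$, is an $R^{\rm spin}_n(X)^G$-bordism between $(N,h,g_N)$ and $j(N',h')=(N',h',\varnothing)$; hence $(N,h,g_N)\in\im j$.

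The hard part will not be the combinatorics of these bordisms but the smooth and metric bookkeeping. I expect the main obstacle to be ensuring that the cappings $N\cup_{\partial N}V$ and the induced corner structures are smoothable after rounding corners, that the positive scalar curvature metrics are in product form near the relevant (co)boundaries so that they glue to genuine positive scalar curvature metrics and meet the product-near-boundary requirement built into $\mathrm{Pos}^{\rm spin}$ and $R^{\rm spin}$, and that every construction respects the $G$-action in the strong sense of Definition~\ref{def:Stolz_groups} (cocompact, spin-structure-preserving, isometric where metrics appear, with isotropy contained in $\mathcal{I}(X)$). Since all of the manifolds above are built either as products with the trivially-acted interval $[0,1]$ or by gluing along $G$-invariant codimension-one pieces, the $G$-equivariance, cocompactness, spin structures and isotropy constraints are preserved; the positivity of the scalar curvature under these operations is handled by the standard collaring arguments, which is where I would concentrate the technical detail.
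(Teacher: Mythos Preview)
Your proposal is correct and follows essentially the same approach as the paper's proof: both check the vanishing of the three composites via the obvious geometric reasons (with the cylinder $M\times[0,1]$ witnessing $j\circ\phi=0$), and for the reverse inclusions both use the null-bordism directly at $\mathrm{Pos}^{\rm spin}_n$ and $\Omega^{\rm spin}_n$, while at $R^{\rm spin}_n$ both cap off $N$ by the $\mathrm{Pos}$-null-bordism $V$ to form a closed manifold $N'=N\cup_{\partial N}V$ and use the cylinder $N'\times[0,1]$ (with the copy of $V$ at one end serving as the psc lateral face) as the required $R$-bordism to $j(N')$. Your additional remarks on corner-smoothing, product-type metrics, and preservation of the $G$-structure are appropriate caveats that the paper simply leaves implicit.
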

	\begin{proof}
		This is a rather direct consequence of the definitions and well known to the
		experts, at least non-equivariantly, stated, e.g., in \cite[long exact sequence~(4.4)]{Stolz} (but
		without proof). The
		argument for the
		$G$-equivariant case is exactly the same as the classical non-equivariant
		situation. As we are not aware of a treatment available in the journal
		literature, we follow the referee's suggestion and give a complete account
		of the argument here.
		
		First, that the composition of two consecutive maps is zero indeed is a
		direct consequence of the definitions:
		\begin{itemize}\itemsep=0pt
			\item Given $[W,f,g]\in R^{\rm spin}_{n+1}(X)^G$, its image in $\Omega_n(X)^G$ is
			represented by $(\boundary W,f|_{\boundary W})$ which indeed is a boundary, hence represents
			$0$.
			\item Given $[M,f,g]\in \mathrm{Pos}_n^{\rm spin}(X)^G$, its image in $R^{\rm spin}_n(X)^G$ is
			the manifold $M$ with empty boundary. It is bordant in $R^{\rm spin}_n(X)^G$
			to $\varnothing$ and hence represents $0$ via the
			bordism $M\times [0,1]$, where $(M\times \{1\},g)$ is a positive scalar curvature bordism of
			$\varnothing =\boundary M$ to $\varnothing=\boundary\varnothing$.
			\item Given $[M,f]\in \Omega^{\rm spin}_n(X)^G$, its image in
			$\mathrm{Pos}_{n-1}^{\rm spin}(X)^G$ is represented by $\boundary M=\varnothing$, hence
			represents $0$.
		\end{itemize}
		
		For the opposite inclusions of the kernels in the image, the constructions
		are almost as straight-forward:
		\begin{itemize}\itemsep=0pt
			\item Assume that $[M,f]\in \Omega^{\rm spin}_n(X)^G$ is mapped to $0$ in
			$R^{\rm spin}_n(X)^G$. This means that there is a null-bordism $(W,F)$. In the
			case at hand, $\boundary W$ has two disjoint parts: on the one hand~$(M,f)$
			and on the other hand $\bigl(M',f',g'\bigr)$ which is a bordism from $\boundary
			M=\varnothing$ to $\boundary\varnothing=\varnothing$ and which is equipped with
			a metric $g$ of positive scalar curvature. But this means, by definition,
			that $[M,f]=\big[M',f'\big]\in \Omega^{\rm spin}_n(X)^G$ where $M'$ is equipped with a
			metric $g$ of positive scalar curvature. Hence we have the preimage
			\smash{$\big[M',f',g\big]\in \mathrm{Pos}^{\rm spin}_n(X)^G$} of the initial equivariant bordism class
			$(M,f)$.
			\item Assume that $[M,f,g]\in \mathrm{Pos}^{\rm spin}_n(X)^G$ is mapped to $0$ in
			$\Omega^{\rm spin}_n(X)^G$. This means that there is a bordism $(W,F)$ with
			$\boundary(W,F)=(M,f)$. But then $[W,F,g]$ represents a class in~$R^{\rm spin}_{n+1}(X)^G$ which is mapped to $[M,f,g]\in \mathrm{Pos}^{\rm spin}_n(X)^G$.
			\item Finally, assume that \smash{$[W,f,g]\in R^{\rm spin}_{n+1}(X)^G$} is mapped to $0$
			in $\mathrm{Pos}_n^{\rm spin}(X)^G$. This means that there is a second bordism
			$\bigl(W',f',g'\bigr)$ whose boundary is $(\boundary W,f|_{\boundary W},g)$.
			We now consider the closed $G$-manifold $Z:= W\cup_{\boundary W} W'$ with
			map $F:= f\cup_{\boundary W} f'\colon Z\to X$ and the bordism $B:=Z\times
			[0,1]$ with map $F\circ \operatorname{pr}_Z\colon B\to X$. The boundary of $B$ consists
			of three parts: the internal boundary $W'$ between $\boundary W$ and
			$\varnothing =\boundary Z$, equipped with the metric~$g'$ of positive scalar
			curvature extending $g$. The other two boundary parts are $W$ and $Z$, and
			by definition $(B,F\circ \operatorname{pr}_Z, g')$ is a bordism in $R^{\rm spin}_n(X)^G$
			between $(W,f,g)$ and $(Z,f,\varnothing)$ and~${[Z,f]\in
			\Omega^{\rm spin}_n(X)^G}$ represents a preimage of $[W,f,g]\in R^{\rm spin}_n(X)^G$.\hfill $\qed$
		\end{itemize} \renewcommand{\qed}{}
	\end{proof}
	
Let $X$ be a connected $G$-CW-complex with $x_0\in X$ and fundamental group
	$\pi_1(X,x_0)$. Let
	\begin{equation*}
		\pi\colon\ \bigl(\tilde{X},\tilde x_0\bigr)\to (X,x_0)
	\end{equation*}
	be the
	associated universal covering projection. Then the
	proper $G$-action on $X$ lifts to a $\widetilde{G}$-action on $\tilde{X}$,
	where
	\begin{equation*}
		1\to\pi_1(X,x_0)\to \widetilde{G}\to G\to 1
	\end{equation*}
	is an extension of
	discrete groups
	defined as follows: the elements of $\widetilde{G}$ are pairs
	$\bigl(\alpha\colon\tilde X\to\tilde X,g\bigr)$ where $g\in G$ and $\alpha$ covers the
	action map of $g$ on $X$. We define the multiplication by composition of the
	maps $\alpha$ and
	multiplication in $G$.
	
	Note that, by covering theory, because $\tilde X$
	has trivial fundamental group, indeed for each $\tilde x_1\in\tilde X$ with
	$\pi(\tilde x_1)=g\cdot \pi(\tilde x_0)$
	there is a unique lift of the action map sending $\tilde x_0$ to $\tilde
	x_1$.
	
	The projection map $\widetilde{G}\to G$ sends $(\alpha,g)$ to $g$. By
	the above consideration, this map is surjective and its kernel consists of
	the deck transformations which, by covering theory, are identified with
	$\pi_1(X,x_0)$.
	
	\begin{Remark}
		The Stolz exact sequence of Proposition~\ref{prop:Stolz_seq} can always be
		reduced to a sequence where the space $X$ is connected. This is done in two
		steps:
		\begin{enumerate}\itemsep=0pt
			\item[(1)] For every component $X_c$ of $X$ consider the induced $G$-invariant
			subspace $G\cdot X_c$. We then have a disjoint union decomposition into
			$G$-invariant subsets $X=\bigdisjointunion G\cdot X_c$ with~$X_c$ connected
			(choosing one representative in each $G$-orbit of components).
			Clearly, every group in the Stolz exact sequence and the whole sequence
			then split canonically as a~direct product, with one factor for each
			$G\cdot X_c$.
			\item[(2)] For a $G$-space $Y=G\cdot X_c$ with $X_c$ connected, consider the
			subgroup $G_0\subset G$ of all elements which map $X_c$ to itself. Then
			$X_c$ is a $G_0$-space and $Y$ is obtained by ``induction'': $Y=
			G\times_{G_0}X_c$. Whenever we have a $G$-map $f\colon M\to Y$ which
			could be part of a~cycle or a bordism for the groups in the Stolz exact
			sequence, then we obtain~${M_0:=f^{-1}(X_c)}$ a union of components of $M$
			on which $G_0$ acts (restricting the action of $G$ on $M$). Then~${f|_{M_0}\colon M_0\to X_c}$ is a $G_0$-equivariant map and we obtain $M$ and
			$f$ by induction: $M=G\times_{G_0} M_0$ and $f=\id_G\times_{G_0}
			f|_{M_0}$.
			It follows that induction gives an isomorphism (already on the level of
			cycles and relations)
			\begin{equation*}
				\mathrm{ind}_{G_0}^G\colon\ R^{\rm spin}_*(X_c)^{G_0}\to R^{\rm spin}_*(Y)^G
			\end{equation*}
			and the same for the other groups in the sequence of Proposition~\ref{prop:Stolz_seq}
			and the maps between them.
		\end{enumerate}
	\end{Remark}
	
	\begin{Remark}
		The isotropy family $\mathcal{I}\bigl(\tilde{X}\bigr)$ of the action of $\widetilde{G}$
		on $\tilde{X}$ is precisely the inverse image of $\mathcal{I}(X)$ under the projection
		\smash{$\widetilde{G}\to G$}.
		
		This again follows from covering theory: if we fix $x\in X$ with lift
		$\tilde x\in\tilde{X}$ and a subgroup $H$ of~$G$ fixing $x$ we have a
		canonical split $H\to \widetilde{G}$ sending $h\in H$ to the unique pair
		$\bigl(\alpha\colon \tilde X\to \tilde{X},h\bigr)$ such that $\alpha(\tilde x)=\tilde
		x$.
	\end{Remark}

	Then we have the following easy identification.
	\begin{Proposition}\label{extension}
		The $G$-equivariant Stolz exact sequence associated to $X$ is isomorphic to
		the $\widetilde{G}$-equivariant Stolz exact sequence associated to
		$\tilde{X}$.
	\end{Proposition}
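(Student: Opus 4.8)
The plan is to realise the claimed isomorphism by the classical correspondence ``pull back the universal cover / divide out the fundamental group'', applied uniformly to cycles and to bordisms in all three groups of the sequence. Concretely, I would define a map $\Phi$ from the $G$-theory over $X$ to the $\widetilde G$-theory over $\tilde X$ by pullback, and a map $\Psi$ in the opposite direction by taking the quotient by $\pi_1(X,x_0)\subset\widetilde G$, and then check that $\Phi$ and $\Psi$ are mutually inverse, additive for disjoint union, and natural with respect to the three structure maps $\partial$, ``forget the metric'' and ``regard a closed manifold as one with empty boundary''. Since a bordism is itself a cycle carrying the same geometric data together with a boundary decomposition, it suffices to carry out the constructions for cycles and to observe that they preserve all boundary, spin and metric data; well-definedness on bordism classes and naturality then follow formally.

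For $\Phi$, given a cycle $(M,f)$ (possibly with an invariant positive scalar curvature metric $g$ and a distinguished boundary) I would form the pullback cover
\[
\tilde M := M\times_X\tilde X=\bigl\{(m,\tilde x)\in M\times\tilde X : f(m)=\pi(\tilde x)\bigr\},
\]
with covering projection $q\colon\tilde M\to M$, $(m,\tilde x)\mapsto m$, and reference map $\tilde f\colon\tilde M\to\tilde X$, $(m,\tilde x)\mapsto\tilde x$. This is a covering of $M$ with deck group $\pi_1(X,x_0)$, and $\widetilde G$ acts on $\tilde M$ by $(\alpha,g)\cdot(m,\tilde x)=(g\cdot m,\alpha(\tilde x))$; this is well defined precisely because $\alpha$ covers the action of $g$ on $X$, and it makes $\tilde f$ equivariant. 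The spin structure and the metric pull back along the local diffeomorphism $q$, the pulled-back metric is again $\widetilde G$-invariant and of positive scalar curvature (positivity is local and $q$ is a local isometry), and $\tilde M/\widetilde G=M/G$ is compact, so $(\tilde M,\tilde f,q^*g)$ is a legitimate $\widetilde G$-cycle over $\tilde X$.

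For $\Psi$, given a $\widetilde G$-cycle $(\tilde M,\tilde f)$ over $\tilde X$, I would divide by the subgroup $\pi_1(X,x_0)\subset\widetilde G$. The key point is that $\pi_1(X,x_0)$ acts freely on $\tilde M$: if $\gamma\in\pi_1(X,x_0)$ fixes a point $\tilde m$ then it fixes $\tilde f(\tilde m)\in\tilde X$ by equivariance, but a nontrivial deck transformation of the simply connected space $\tilde X$ has no fixed point; equivalently, the stabiliser of $\tilde f(\tilde m)$ meets $\pi_1(X,x_0)$ trivially because it maps isomorphically onto a subgroup of $G$. Together with properness this makes $\tilde M\to M:=\tilde M/\pi_1(X,x_0)$ a covering, $M$ inherits a spin structure, an invariant positive scalar curvature metric and a residual action of $G=\widetilde G/\pi_1(X,x_0)$, and $\pi\circ\tilde f$ is $\pi_1(X,x_0)$-invariant, hence descends to a $G$-map $f\colon M\to X$. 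The two constructions are mutually inverse: $q\colon\tilde M\to M$ is exactly the quotient by $\pi_1(X,x_0)$, so $\Psi\Phi=\id$, while the $\widetilde G$-map $\tilde m\mapsto([\tilde m],\tilde f(\tilde m))$ identifies an arbitrary $\tilde M$ with the pullback $M\times_X\tilde X$ (injectivity and surjectivity again use that $\pi_1(X,x_0)$ acts freely on $\tilde X$), so $\Phi\Psi=\id$.

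Finally, both $\Phi$ and $\Psi$ commute with disjoint union, and they commute with the three structure maps of the sequence: the boundary of the pullback is the pullback of the boundary, $\partial(M\times_X\tilde X)=(\partial M)\times_X\tilde X$, forgetting the metric is visibly compatible, and so is regarding a closed manifold as a manifold with empty boundary. Hence $\Phi$ and $\Psi$ assemble into an isomorphism of the two Stolz ladders. The step requiring the most care is the spin bookkeeping: one must check that the lift of the $G$-action to the $\mathrm{Spin}$ principal bundle pulls back to a lift of the $\widetilde G$-action and, conversely, descends along the free $\pi_1(X,x_0)$-action, so that $\Phi$ and $\Psi$ respect the spin structures together with their equivariant lifts. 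This, together with verifying that $\pi_1(X,x_0)$ acts freely on an arbitrary $\widetilde G$-cycle, is where covering theory and the isotropy description of $\tilde X$ are genuinely used.
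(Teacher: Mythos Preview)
Your proposal is correct and follows essentially the same approach as the paper: both construct mutually inverse maps by pulling back along $\pi\colon\tilde X\to X$ in one direction and quotienting by the free $\pi_1(X,x_0)$-action in the other, with the $\widetilde G$-action on the pullback given by exactly the formula you wrote. Your account is in fact more thorough than the paper's, which leaves the spin bookkeeping, the explicit verification of $\Phi\Psi=\id$, and the naturality with respect to the three structure maps to the reader.
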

	\begin{proof}
		The action of $\pi_1(X)\subset \widetilde{G}$ on $\tilde X$ is
		free. Consequently, for every $\widetilde{G}$-map ${f\colon \bar M\to\tilde X}$ the
		action of $\pi_1(X)$ on $\bar M$ is free. We can therefore quotient out this
		action and obtain a~cycle~${M:=\bar M/\pi_1(X)\to X}$ with residual action of
		$G:=\widetilde{G}/\pi_1(X)$.
		Vice versa, given a~$G$-map~${f\colon M\to X}$ we can pull back $\tilde X\to
		X$ along $f$ and obtain a $\pi_1(X)$-covering $\tilde M\to M$ with map
		$\tilde f\colon \tilde M\to \tilde X$. Because $f$ is a $G$-map, it is
		straightforward to pull back
		also the action of $\widetilde{G}$ to an action on $\tilde M$ which covers
		the action of $G$ on $M$ and such that $\tilde f$ is a $\widetilde{G}$-map
		(indeed, the $\widetilde{G}$-action on $\tilde M$ is defined mapping a point
		$(m,\tilde x)\in \tilde M\subset M\times \tilde X$ by
		$(\alpha,g)\in\widetilde{G}$ to $(gm,\alpha(\tilde x))$).
		
		The same construction gives a bijection between $G$-invariant metrics on $M$
		and $\widetilde{G}$-invariant metrics on $\tilde M$, and also works for
		bordisms.
		These constructions are clearly inverse to each other and preserve all
		additional structure and define the required bijections.
	\end{proof}
	
	\begin{Remark}
		For the paper at hand, the transition to the universal covering as in
		Proposition~\ref{extension} is not really relevant. However, in other
		situations this point of view is really fruitful. To our knowledge,
		essentially all information about non-triviality of the groups
		$R^{\rm spin}_n(X)$ and~$\mathrm{Pos}_n^{\rm spin}(X)$ use higher index theory of the Dirac
		operator. A particularly transparent way to do this is to use the
		equivariant Dirac operator on the universal covering. This essentially means
		that one applies the isomorphism between the groups ``downstairs'' for $X$ and the
		$\pi_1(X)$-equivariant groups ``upstairs'' for the universal covering
		$\tilde X$, as we have already mentioned in the introduction.
		
		It
		is pretty obvious that the same observation does hold for non-free actions: even
		if we want to understand symmetric metrics of positive scalar curvature on a
		compact manifold $M$ with smooth action of a finite group $\Gamma$, it will
		be very useful to pass to the covering space which also sees the symmetries
		induced by the non-trivial fundamental group. This strategy has already been
		implemented in \cite[Section 5]{XieYu}
		and \cite{XieYuZeidler}. The first paper constructs the map from the
		equivariant
		Stolz sequence to analysis (in form of the Higson--Roe exact sequence for the
		K-theory of the Roe $C^*$-algebra) in the
		sense of \cite{PiazzaSchick}. The second paper then uses this to explicitly
		distinguish many bordism classes of equivariant positive scalar curvature
		metrics.
	\end{Remark}

	\subsection{Refinements beyond bordism of positive scalar curvature metrics}
	
	The Stolz positive scalar curvature exact sequence of Proposition
	\ref{prop:Stolz_seq} gives important information about the existence and
	classification of metrics of positive scalar curvature. However, by the very
	definition this information is about \emph{bordism} classes and we are of
	course also interested in a fixed given manifold $M$.
	
	Non-equivariantly, the situation here is very satisfactory: for a given
	connected closed spin manifold $M$ with $\dim(M)\ge 5$, if we use $X=B\pi_1M$
	then $M$ itself admits a metric with positive scalar curvature if (and only
	if) the image of $[u\colon M\to B\pi_1(M)]$ in $R_n^{\rm spin}(B\pi_1(M))$
	vanishes. Moreover, in this case $R_{n+1}^{\rm spin}(B\pi_1(M))$ acts freely and
	transitively on the concordance classes of metrics of positive scalar
	curvature on $M$.
	
	Of course, it would be very desirable to extend such results to the
	equivariant case. Unfortunately, the proof in the non-equivariant case uses as
	an important tool ``handle cancellation'' for Morse functions. It is known
	that equivariantly such cancellation is not even true. Therefore, a~general
	treatment of the equivariant case seems not in reach at the moment. Under very
	special conditions on the action, positive existence results can be
	obtained. The strongest results in this direction we are aware of are obtained
	in \cite{Hanke}. There, also the difficulties are discussed when one attempts
	to obtain more general results. In the paper at hand, we focus on the bordism
	context and do not attempt to contribute to obstruction and classification
	results for G-invariant metrics of positive scalar curvature on a fixed
	manifold $M$.
	
\section[Invariance of R-groups under 2-equivalence]{Invariance of $\boldsymbol{R}$-groups under 2-equivalence}
	
	\begin{Theorem}\label{G2conn}
		Let $f\colon X \rightarrow Y$ be a continuous, $2$-connected $G$-map
		between $G$-CW-complexes $($in particular, it induces a bijection
		between components of fixed point sets and an isomorphism of the
		fundamental groups of all
		components of fixed point sets of $X$ and the corresponding ones of
		$Y)$. Then for $*\ge 6$ the
		functorially induced map
$f_{*}\colon R_{n}^{\rm spin}(X)^{G} \rightarrow R_{n}^{\rm spin}(Y)^{G}
$
		is an isomorphism.
	\end{Theorem}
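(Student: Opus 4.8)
The plan is to reduce to a cellular inclusion and then to \emph{compress} reference maps into the subspace by equivariant surgery below the middle dimension, keeping the positive scalar curvature metrics under control. First I would replace $f$, up to $G$-homotopy equivalence, by an inclusion of $G$-CW-complexes $X\hookrightarrow Y$ such that the relative $G$-cells $G/H\times D^k$ of $(Y,X)$ all have $k\ge 3$. This is possible because $f$ is $2$-connected: on each fixed point set $f^H$ is $2$-connected, so by equivariant cellular and Whitehead approximation (Theorem~\ref{cellular}, Proposition~\ref{whitehead}, compare \cite{ttd}) the mapping cylinder of $f$ deformation retracts onto such a relative complex, the connectivity hypothesis of Definition~\ref{def:connectedness} being exactly what forces the relative cells to have dimension at least $3$. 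Since homotopic reference maps give the same bordism class (via the product bordism over the homotopy), exhibiting a class as the image under $f_*$ is the same as compressing a $Y$-reference map into $X$ up to homotopy.

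For surjectivity I would take a cycle $(W,h,g)$ for $R^{\rm spin}_n(Y)^G$ with $n\ge 6$ and compress $h$ into $X$ through a bordism in two stages. First I treat the boundary: $\partial W$ is a closed $G$-manifold of dimension $n-1\ge 5$ carrying the $G$-invariant positive scalar curvature metric $g$, and I would make $h|_{\partial W}$ transverse to the cocores of the relative cells and then surger away, equivariantly and orbit by orbit, the obstruction to pushing the image into $X$. Because the relative cells have dimension $\ge 3$, all these surgeries occur in codimension $\ge 3$, so the equivariant Gromov--Lawson--Schoen--Yau construction produces a $G$-invariant positive scalar curvature metric on the surgered boundary and realizes the modification as a $\mathrm{Pos}^{\rm spin}_{n-1}(X)^G$-bordism; after this stage $h|_{\partial W}$ factors through $X$. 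Next I compress $h$ on the interior, where no metric is present, so I may surger $W$ freely (again in codimension $\ge 3$, compatibly with the spin structure, using the room provided by $n\ge 6$) rel the boundary, obtaining an $R^{\rm spin}_n$-bordism to a cycle whose reference map factors through $f$. This shows $f_*$ is surjective.

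For injectivity I would run the same compression on a nullbordism. If $(W,h,g)$ is a cycle for $R^{\rm spin}_n(X)^G$ with $f_*[W,h,g]=0$, there is an $(n+1)$-dimensional bordism $(V,H,\bar g)$ for $R^{\rm spin}_{n+1}(Y)^G$ witnessing this, with $W$ a boundary face of $V$ and the side face $\partial_0 V$ a $\mathrm{Pos}$-bordism trivializing $\partial W$. Since $\dim V=n+1\ge 7$, I would compress $H$ into $X$ \emph{rel} the face $W$ (whose reference map already factors through $X$), by the same equivariant codimension-$\ge 3$ surgeries on the interior and on the free faces, preserving the positive scalar curvature metric on the side face via Gromov--Lawson--Schoen--Yau. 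The result is an $R^{\rm spin}_{n+1}(X)^G$-bordism, so $[W,h,g]=0$ already in $R^{\rm spin}_n(X)^G$.

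The hard part will be the equivariant surgery inside each stage: one must represent the compression obstructions by \emph{embedded} equivariant spheres $G/H\times S^j$ with trivialized normal data, carry out the surgeries compatibly with the $G$-action, the spin structure and the corner structure of the $R$-bordisms, and keep every surgery in codimension $\ge 3$ so that positive scalar curvature survives on the metric faces. A subtlety emphasized earlier is that equivariant \emph{handle cancellation} fails in general; what rescues the argument is that in the bordism setting one only ever \emph{adds} handles and never needs to cancel, while the bound $n\ge 6$ together with $2$-connectivity supplies exactly the codimension $\ge 3$ required both for the equivariant psc surgery theorem and for leaving the relevant low-dimensional homotopy data of the fixed point sets of $Y$ intact. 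Read with $G$ trivial, the very same argument yields the concise proof of the classical non-equivariant statement.
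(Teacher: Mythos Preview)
Your outline has a genuine gap at the core of both halves: the step ``surger away the obstruction to pushing $h|_{\partial W}$ into $X$'' does not work as you describe. Making $h|_{\partial W}$ transverse to the cocore of a relative $k$-cell of $(Y,X)$ yields a codimension-$k$ \emph{submanifold} $N\subset\partial W$, not an embedded sphere, and there is no surgery move that simply deletes $N$ while keeping the reference map under control. The bound $k\ge 3$ says that $N$ has codimension $\ge 3$; it does \emph{not} say that whatever surgeries you would need to eliminate $N$ occur in codimension $\ge 3$ --- those are two different codimensions, and you have conflated them. More fundamentally, your first stage treats $\partial W$ in isolation and never uses that $\partial W$ bounds $W$ with a compatible map to $Y$; absent that bounding datum there is no reason the class $[\partial W,h|_{\partial W},g]\in\mathrm{Pos}^{\rm spin}_{n-1}(Y)^G$ should lie in the image from $X$, so the compression you want cannot be produced by surgery on $\partial W$ alone. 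Your acknowledgement that ``one must represent the compression obstructions by embedded equivariant spheres'' is exactly the missing idea, and it is not supplied by the $2$-connectivity of $f$.

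The paper proceeds quite differently and avoids this issue by working on $W$ rather than on $\partial W$. One chooses a $G$-invariant self-indexing Morse function on $W$, viewed as a bordism from $\varnothing$ to $\partial W$, and splits $W=W_1\cup_{M_1}W_2$ so that $W_1$ is built from $\varnothing$ by $0$-, $1$- and $2$-handles while $W_2$ carries only handles of index $\ge 3$. Reversing $W_2$ exhibits it as obtained from $\partial W$ by surgeries of codimension $\ge 3$, so the equivariant Gromov--Lawson theorem extends $g$ to a psc metric on $W_2$ restricting to $g_1$ on $M_1$; hence $(W_1,\varphi|_{W_1},g_1)$ is $R$-bordant to $(W,\varphi,g)$ via $W\times[0,1]$. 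Now $W_1$ is $G$-homotopy equivalent to a $2$-dimensional $G$-CW-complex, so $\varphi|_{W_1}$ factors through $Y^{(2)}$ by equivariant cellular approximation, and Proposition~\ref{whitehead} (applied to the $2$-connected $f$) provides a homotopy section $Y^{(2)}\to X$, yielding the lift. Injectivity runs the same Morse splitting on the nullbordism $B$ rel the face $W$. The point is that the handle structure of the \emph{bounding} manifold simultaneously supplies the psc bordism and produces a homotopically $2$-dimensional piece on which compression is automatic; no surgery on $\partial W$ is attempted.
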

	\begin{proof}
\!{\it Surjectivity.}\! We start by showing the surjectivity of the map
$f_{*}\colon R_{n}^{\rm spin}(X)^{G} \!\rightarrow\! R_{n}^{\rm spin}(Y)^{G}$.
		Let us consider a class $[W,\varphi\colon W \rightarrow Y, g] \in
		R_{n}^{\rm spin}(Y)^{G}$. We want to find a \textit{bordant} cycle whose reference map factors through $f$.
		
Consider W as a bordism between its boundary $\partial W$ and the empty set
		and choose a~$G$-invariant Morse function $\alpha\colon W\to \RR$ on it with
		critical points rearranged as described in~\cite[Theorem 4.8]{Milnor}, namely
		for any critical points $p_{i}$ and $p_{j}$ such that $f(p_{i}) < f(p_{j})$,
		we have that~${\operatorname{Ind}_f(p_{i}) < \operatorname{Ind}_f(p_{j})}$, where $\operatorname{Ind}_f(p)$ denotes the Morse index at $p$ of the function $f$. Notice that we are going to use the
		enhanced version of this result to the equivariant setting, see for instance
		\cite{Mayer} or \cite{Wasserman}.
		
		Then there exists a suitable $t\in \RR$ such that the subset $W_1:=\alpha^{-1}([0,t])\subset W$ consists only of $G$-handles of dimensions 0, 1 and 2. We immediately obtain a decomposition of $W$ as~${W_{1} \cup W_{2}}$ such that $W_{1}$ is a bordism from the empty set to $M_{1}:=\alpha^{-1}(t)$ and $W_{2}$ a bordism from $M_{1}$ to~$\partial W$.
		Of course, $W_{2}$ has only critical points $p_{i}$ with~$\operatorname{Ind}_{\alpha}(p_{i}) \geq 3$. Consider now the function~$-\alpha$: this is a Morse function on $W_{2}$ seen as a bordism from $\partial W$ to $M_{1}$ with same critical points~$p_{i}$ but with indices now given by $\operatorname{Ind}_{-\alpha}(p_{i})=\dim(W)-\operatorname{Ind}_{\alpha}(p_{i})$. These critical points $p_{i}$ are then associated to $G$-equivariant~${(\operatorname{Ind}_{-\alpha}(p_{i})-1)}$-surgeries, hence of codimension~${\operatorname{Index}_{\alpha}(p_{i})+1}$ which is $\geq 3$.
		
		\begin{figure}[h]
			\centering
			\includegraphics[width=45mm]{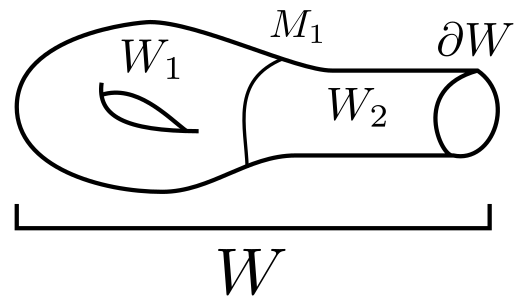}
		\end{figure}
		
		This allows us to apply the Gromov--Lawson theorem in its $G$-equivariant
		version as it is proved in \cite[Theorem 2]{Hanke}. This implies that we can
		extend the metric with positive scalar curvature $g$ on $\partial W$ to a
		$G$-invariant metric with positive scalar curvature $\bar{g}$ on $W_2$. Let us
		denote by $g_{1}$ its restriction to $M_{1}$.
		Observe that the triad $(W_{1},\varphi_{|W_{1}},g_{1})$ defines a class in~\smash{$R_{n}^{\rm spin}(Y)^{G}$} and the manifold $W \times [0,1]$ provides a bordism between \smash{$(W_{1},\varphi_{|W_{1}},g_{1})$} and $(W,\varphi, g)$.
		
		Consider now the natural $G$-equivariant inclusion of the 2-skeleton given by $i\colon
		Y^{(2)}\hookrightarrow Y$. Here we have the following facts:
		\begin{itemize}\itemsep=0pt
			\item 	Since the manifold $W_{1}$ is obtained from the empty set by attaching $G$-handles of dimension~0,~1 and 2, it is homotopy equivalent to a 2-dimensional $G$-CW-complex. It follows from Theorem~\ref{cellular} that the map $\varphi_1:=\varphi_{|W_{1}}$ factors through $i$ up to homotopy.
			\item Since $f$ is 2-connected, up to $G$-homotopy we can assume that its
			restriction to the 2-skeleton
		\smash{$
				f^{(2)}\colon\ X^{(2)} \rightarrow
				Y^{(2)}\rightarrow Y
			$}
			has a right inverse, i.e., there exists a $G$-equivariant map~${h\colon
			Y^{(2)} \rightarrow X^{(2)}}$ such that $f^{(2)}\circ h$ is homotopic to
			$i$. To see this, observe that the existence of such a map $h$ is guaranteed,
			up to $G$-homotopy, by Proposition~\ref{whitehead}. In fact, since $f^{(2)}$
			is $2$-connected and $Y^{(2)}$ has dimension $\leq 2$, it suffices to apply
			Proposition~\ref{whitehead} with $A=Y^{(2)}$, $B=X^{(2)}$, $C=Y$ and $f=f^{(2)}$ to the
			map $i \in [A,C]^G$. The surjectivity of~$f_*$ then implies the existence
			of $h$.
		\end{itemize}
		Thus, we obtain the following commutative diagram of $G$-equivariant maps
$$
			\xymatrix{W_1 \ar[rr]^{\varphi_1} \ar[d]_{\varphi_1}&& Y & \\
				Y^{(2)} \ar[d]_h \ar[urr]^i & & \\
				X^{(2)} \ar[uurr]_{f^{(2)}} \ar[rr]_j && X\ar[uu]_f}
$$
		and, if we set $\psi:= j\circ h\circ \varphi_1\colon W_1\to X$, we obtain by
		construction that
		\[
f_*[W_1,\psi\colon W_1\to X, g_1]=[W,\varphi\colon W\to Y,g]\in R^{\rm spin}_n(Y)^{G},
\]
 which proves that $f_*$ is surjective.
		
\textit{Injectivity.} In order to prove the injectivity of $f_*\colon R_{n}^{\rm spin}(X)^{G}\to R_{n}^{\rm spin}(Y)^{G} $, let us consider a~class $[W,\varphi\colon W \rightarrow X,g]\in R_{n}^{\rm spin}(X)^{G}$ such that its image $f_{*}[W,\varphi\colon W \rightarrow X,g]$ is equal to the trivial element in $R_{n}^{\rm spin}(Y)^{G}$. This means that there exists
		\begin{itemize}\itemsep=0pt
			\item an $(n+1)$-dimensional $G$-manifold with corners $B$, whose codimension 1
			faces are $W$ itself together with a bordism $V$ from $\partial W$ to the empty set, which intersect in the only codimension 2 face $\partial W=W\cap V$;
			\item a $G$-invariant metric $g_V$ on $V$ of positive scalar curvature of
			product type near the boundary which restricts to the metric $g$ on $\partial
			W$;
			\item a $G$-equivariant map $\Psi\colon B\to Y$ which restricts to
			$f\circ\varphi$ on $W$.

		\end{itemize}
		
		\begin{figure}[h]
\vspace*{-6mm}
			\centering
			\includegraphics[width=28mm]{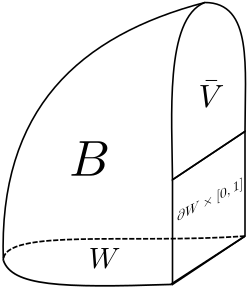}
		\end{figure}
		
		Consider now a $G$-invariant collar neighborhood of $\partial W$ inside $V$ such that the boundary of $B$ is made of three faces of codimension 1: $W$ on the bottom, $\partial W \times [0,1]$ vertically and~${\bar{V}=V\setminus\partial W \times [0,1)}$ on the top.
		
We want to split the bordism $B$, as we did in the proof of surjectivity, into the composition of two bordisms, first from $W$ to a manifold with boundary $W_1$ and then from $W_1$ to $\bar{V}$, such that the first one involves only handle attachments of dimension less or equal than 2 and the second one only of dimension greater or equal than 3.
		Since the vertical boundary face $\partial W \times [0,1]$ is a cylinder, $B$ can be obtained from $W$ by attaching all the handles to the interior of $W$, away from $\partial W \times [0,1]$.
		Hence we can find a Morse function on $B$ which has all critical points there.

Thus, we can decompose $B$ as desired: $B_{1}$ from $W$ to $W_{1}$
		involving only 0, 1, 2 handle attachments and $B_{2}$ from $W_{1}$ to
		$\bar{V}$. We can assume that these two bordisms have vertical
		boundaries faces equal to $\partial W \times [0,1/2]$ and $\partial W
		\times [1/2,1]$, respectively, and therefore that $W_{1}$ has
		boundary equal to $\partial W$.
		
		\begin{figure}[h]
			\centering
			\includegraphics[width=50mm]{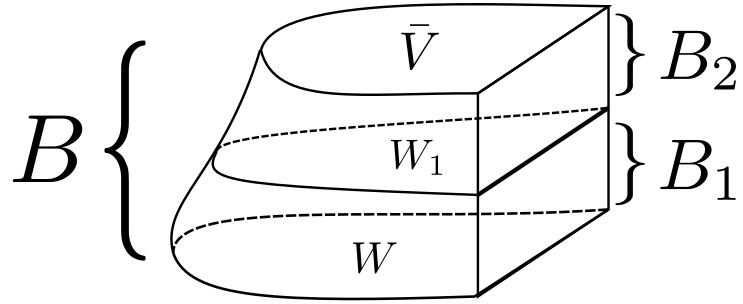}
		\end{figure}

		By construction, the bordism $B_{2}$ is the trace of surgeries of codimension
		$\geq 3$. Therefore, we can apply the equivariant version \cite[Theorem~2]{Hanke} of the Gromov--Lawson theorem to extend the
		metric $g_{\bar{V}}$ to a $G$-invariant metric of positive scalar curvature
		$g_{2}$ on $B_2$. Let us denote by $g_1$ the $G$ -invariant metric of positive
		scalar curvature obtained by restricting $g_{2}$ to~$W_{1}$.
		
The last fact to prove is that $\Psi_{|B_1}\colon B_1\to Y$ factors through $f\colon X\to Y$.
		Indeed, $B_1$ is obtained form $W$ by attaching, up to homotopy, cells
		of dimension up to~2. Define a~map $h\colon B_1\times \{0\}\cup W\times [0,1]\to Y$ as
		the restriction of $\Psi|_{B_1}\circ \mathrm{pr}$ to this subspace of $B_1\times [0,1]$, where~${\mathrm{pr}\colon
		B_1\times [0,1]\to B_1}$ is the obvious projection. Note that by assumption the
		restriction of this map to $W\times \{1\}$ equals $f\circ \varphi$. These are
		exactly the conditions of the relative precursor \cite[Proposition~2.5]{ttd}
		of Proposition~\ref{whitehead} which now implies in particular (because $f$ is
		$2$-connected) that there exists an extension $K\colon B_1\to X$ of
		$\varphi$.
		
		Now observe that $(B_1, \Phi, g_1)$ is a bordism between $(W,\varphi,g)$
		and the trivial cycle $(W_1,K|_{W_1},\allowbreak g_1|_{\partial W})$ in
		$R^{\rm spin}_n(X)^{G}$ (trivial because the metric extends to the metric $g_1$ of
		positive scalar curvature on all of $W_1$), the injectivity of $f_*$ is proved.
	\end{proof}

	\section{Universal spaces}
	
	In algebraic topology, a convenient way to deal with fundamental group
	information is by using the \emph{classifying space} $B\Gamma$ of a (discrete)
	group $\Gamma$. Let us paraphrase its relevant property:
	
	$B\Gamma$ has the following \emph{universal property}: for every connected
	CW-complex $X$ with basepoint~$x_0$ the fundamental group functor gives a
	bijection
	\begin{equation*}
		[(X,x_0), (B\Gamma,y_0)] \to \Hom(\pi_1(X,x_0), \Gamma)
	\end{equation*}
	between the pointed homotopy classes of maps from $X$ to $B\Gamma$ and the group
	homomorphisms from $\pi_1(X)$ and $\Gamma$.
	
	It is known that $B\Gamma$ is characterized by the fact that it is
	connected, has fundamental group~$\Gamma$, whereas all higher homotopy
	groups are trivial.
	
	Our goal now is to achieve the corresponding result for a $G$-CW-complex
	$X$. We observe right away that we have now much richer fundamental group
	information: not only $X$, but also each fixed point set $X^H$ for a subgroup
	$H$ of $G$ (or rather each component of $X^H$) has a fundamental group and
	the action of $g\in G$ as well as fixed point set inclusions induce maps between these fundamental groups.
	
	As a first step, we define the \emph{fundamental groupoid functor} of a $G$-CW-complex as a natural generalization of the fundamental group of a CW-complex.
	In order to do that, let us first recall the definition of fundamental groupoid $\Pi_1(X)$ of a topological space $X$: it is the groupoid whose objects are the points of $X$; whose arrows, from $x$ to $y$ for instance, are equivalence classes of paths starting at $x$ and ending at $y$, where the equivalence relation is given by homotopy of paths with fixed starting and ending points; the composition is given by the concatenation of paths. Observe that the set of arrows starting and ending at a same point $x\in X$ is just the fundamental group of $X$ at $x$.
	\begin{Definition}
		Define for a family of subgroup $\mathcal{F}$ of $G$ the following orbit
		category
		$
		\Orb(G,\mathcal{F})
		$,
		whose objects are all subgroups in $\mathcal{F}$ and morphisms from
		$H\in\mathcal{F}$ to $K\in\mathcal{F}$ are $G$-maps
		$G/H\to G/K$ for $H,K\in \mathcal{F}$. Note\footnote{To be explicit: as
			$G/H$ consists of a single $G$-orbit, the map is determined by the image
			of the coset $H$. Say this image is the coset $gK$. Then by
			$G$-equivariance for each $x\in G$ we must have that $xH$ is mapped to
			$xgK$. To be well defined, for every $x\in G$ and $h\in H$ we require that
			$xhgK$ and $xgK$ are in the same $K$-coset, i.e., $g^{-1}x^{-2}xhg\in K$
			for each $h\in H$, which is the condition $g^{-1}Hg\subset K$.} that any such map is of the form $xH\mapsto xgK$ for a
		well defined coset $[g]\in G/K$, where we also have that $g^{-1}Hg \subset
		K$ ---precisely the condition for this map to be well defined.
		
		As an abbreviation, define $\Orb(G):=\Orb(G,\mathcal{ALL})$ for the family
		of all subgroups of $G$.
	\end{Definition}
	
	\begin{Definition}
		Let $X$ be a $G$-CW-complex. The fundamental groupoid functor of $X$ is the contravariant functor
		\[
		\Pi_1(X;G)\colon\ \Orb(G,\mathcal{I}(X))\to \text{groupoids},
		\]
		which associates
		\begin{itemize}\itemsep=0pt
			\item to a group $H\in \mathcal{I}(X)$ (the isotropy
			family) the fundamental groupoid of $X^H$ restricted
			to the $0$-skeleton of $X^H$ (meaning that we take
			the full subgroupoid), which we denote by~\smash{$\Pi_1\bigl(X^H\bigr)_{|X^H_{(0)}}$},
			\item to a morphism from $H$ to $K$ in
			the category $\Orb(G,\mathcal{I}(X))$ given as $G$-map
			$G/H\mapsto G/K; xH\mapsto xgK$ the morphism of groupoids between
			\smash{$\Pi_1(X^K)_{|X^K_{(0)}}$} and \smash{$\Pi_1\bigl(X^H\bigr)_{|X^H_{(0)}}$} induced by the map
			$X^K\to X^H$ defined as $x\mapsto gx$.
		\end{itemize}
		
		This extends canonically to a functor
		\begin{equation*}
			\Pi_1(X;G)\colon\ \Orb(G)\to \text{groupoids},
		\end{equation*}
		assigning to $H\notin \mathcal{I}(X)$ the empty groupoid. Note that the empty groupoid has a unique
		map to any other groupoid, but is not the target of any map from a non-empty
		groupoid. The former is good because it determines the value of $\Pi_1(X;G)$
		on morphisms in $\Orb(G)$ with domain $H$. The latter is no problem because
		families are closed under conjugation and under taking
		subgroups and therefore if $K\in \mathcal{I}(X)$ and $H\notin\mathcal{I}(X)$
		there never is a morphism from $K$ to $H$ in $\mathcal{ALL}$.\footnote{Recall
			that a morphism from $H$ to $K$ exists if and only if $\exists g\in G$ with
			$g^{-1}Hg\subset K$.}
		
		Below, when dealing with different $G$-spaces with potentially
		different isotropy, we are using this version of the fundamental groupoid functor.
	\end{Definition}
	
	\begin{Remark}
		More conceptually, to construct the fundamental groupoid functor, we can
		observe that we have the canonical morphism $\mathrm{map}^G(G/H, X)\to X^H$ between
		the mapping space of $G$-equivariant maps and the fixed point set, sending
		$f\colon G/H\to X$ to $f(eH)$. The functoriality is then just given by
		precomposition.
	\end{Remark}
	
	Observe that the construction of the fundamental groupoid functor is itself
	functorial. This means that if
	$\varphi\colon Y\to X$ is a $G$-equivariant cellular map between
	$G$-CW-complexes, then there is an induced natural transformation
	$\varphi_{\#}\colon \Pi_1(Y;G)\to \Pi_1(X;G)$
	whose component at $H$ is the homomorphism of groupoids
	\smash{$\varphi_{\#}(H)\colon \Pi_1\bigl(Y^H\bigr)_{|Y^H_{(0)}}\to \Pi_1\bigl(X^H\bigr)_{|X^H_{(0)}}$}
	induced by ${\varphi_{|Y^H}\colon Y^H\to X^H}$.
	
	\subsection{Fundamental groupoid functor realization}
	
	We know that every discrete group is the fundamental group of a 2-dimensional
	CW-complex. The corresponding result holds for our fundamental groupoid
	functors:
	\begin{Proposition}\label{prop:realize_Pi}
		Let $G$ be a discrete group and $\Pi\colon \Orb(G)\to
		Groupoids$ be a functor whose image is given by discrete groupoids. Then there is a $G$-CW-complex $X$
		with $\Pi_1(X;G) \cong \Pi$.
	\end{Proposition}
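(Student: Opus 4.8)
The plan is to realize $\Pi$ fixed-point-system by fixed-point-system, by applying objectwise the classifying space construction for groupoids and then coalescing the resulting diagram into a single $G$-CW-complex. The non-equivariant input is the groupoid analogue of the sentence preceding the statement: for an ordinary groupoid $\Gamma$ the geometric realization $B\Gamma:=\lvert N_\bullet\Gamma\rvert$ of its nerve is a CW-complex whose $0$-skeleton is $\mathrm{Ob}(\Gamma)$ and which satisfies $\Pi_1(B\Gamma)_{|(B\Gamma)_{(0)}}\cong\Gamma$, naturally in $\Gamma$ (higher homotopy is irrelevant for us, since the fundamental groupoid of a CW-complex depends only on its $2$-skeleton). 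Because the nerve is a functor from groupoids to spaces, postcomposition with $\Pi$ yields a contravariant diagram of spaces $\mathbf X:=B\circ\Pi\colon \Orb(G)\to\text{spaces}$, with $\mathbf X(H)=B(\Pi(H))$ and $\mathbf X(H)=\varnothing$ exactly when $\Pi(H)$ is the empty groupoid, i.e., when $H$ lies outside the prospective isotropy family.

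First I would realize this diagram as the fixed-point system of an honest $G$-CW-complex. Using the adjunction behind Elmendorf's theorem (compare the constructions in \cite{tomDieck,ttd}), applied to the functor $X\mapsto(G/H\mapsto X^H)=(G/H\mapsto\mathrm{map}^G(G/H,X))$, one obtains a $G$-CW-complex $X$ together with natural weak equivalences $e_H\colon X^H\to\mathbf X(H)=B(\Pi(H))$ compatible with every orbit-category morphism; in particular $X^H=\varnothing$ precisely for $H\notin\mathcal I(X)=\{H:\Pi(H)\neq\varnothing\}$. Explicitly one can take the bar-construction left adjoint, a homotopy colimit of $\mathbf X$ over $\Orb(G)$, which is built from equivariant cells $G/H\times D^k$ assembled out of the bar simplices and the cells of the spaces $B(\Pi(H))$; since fundamental groupoids depend only on $2$-skeleta, one may afterwards pass to the equivariant $2$-skeleton $X^{(2)}$, whose $H$-fixed set is $(X^H)^{(2)}$, to obtain a two-dimensional model without changing the fundamental groupoid functor.

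Then I would identify $\Pi_1(X;G)$ with $\Pi$. The fundamental groupoid is a homotopy functor sending weak equivalences of spaces to equivalences of groupoids, and for any CW-complex $Z$ the inclusion of the $0$-skeleton induces an equivalence $\Pi_1(Z)_{|Z_{(0)}}\simeq\Pi_1(Z)$, since every component meets $Z_{(0)}$ and the full subgroupoid inclusion is fully faithful. Applying these to each $e_H$ and invoking naturality over $\Orb(G)$ produces a natural equivalence $\Pi_1(X;G)\xrightarrow{\ \sim\ }\Pi$, which is the asserted identification. One caveat worth recording is that an \emph{on-the-nose} bijection of object sets cannot be demanded in general: the $0$-skeleton presheaf $H\mapsto(X^H)_{(0)}$ of any $G$-CW-complex equals $\coprod_i\mathrm{Hom}_{\Orb(G)}(-,K_i)$ (one summand per equivariant $0$-cell $G/K_i$) and is therefore a coproduct of representables, whereas $\mathrm{Ob}\circ\Pi$ need not be; the identification is thus an equivalence of the groupoids $\Pi_1(X^H)_{|(X^H)_{(0)}}$ and $\Pi(H)$, realized up to the inevitable replacement of object sets by equivalent skeleta.

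The main obstacle is \emph{coherence}, rather than any individual computation. One must arrange the classifying-space construction and its equivariant coalescence to be simultaneously functorial over all of $\Orb(G)$, so that for every orbit-category morphism $H\to K$, given by $xH\mapsto xgK$ with $g^{-1}Hg\subseteq K$, the induced map $X^K\to X^H$ recovers on fundamental groupoids exactly the functor $\Pi(K)\to\Pi(H)$. This is precisely what the bar/homotopy-colimit model delivers, since it is assembled from the diagram $\mathbf X$ together with the representables $\mathrm{Hom}_{\Orb(G)}(-,H)$ and is functorial by construction; the only remaining care is the bookkeeping of the isotropy family and of the empty-groupoid values, which is handled by the convention, built into the definition of $\Pi_1(X;G)$, that assigns the empty groupoid to subgroups outside $\mathcal I(X)$ and uses that no morphism in $\Orb(G)$ runs from a subgroup in the family to one outside it.
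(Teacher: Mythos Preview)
Your proposal is correct and shares the paper's core strategy: compose $\Pi$ with the nerve/classifying-space functor to obtain a contravariant diagram $H\mapsto |\Pi(H)|$ over $\Orb(G)$, then assemble this diagram into a single $G$-CW-complex whose fixed-point system recovers the diagram up to weak equivalence. The difference is one of packaging. You invoke Elmendorf's theorem (the bar-construction left adjoint) as a black box; the paper carries out essentially the $1$-truncated bar construction by hand, writing down the ``multiple mapping cylinder'' $W=\bigl(\bigsqcup_H G/H\times|\Pi(H)|\,\amalg\,\bigsqcup_{\alpha} G/H\times[0,1]\times|\Pi(K)|\bigr)/{\sim}$ and then verifying the fixed-point fundamental groupoids via an explicit van~Kampen argument. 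The paper adds one extra move you omit: it first forms the strict co-end $X^{(0)}:=E\otimes_{\Orb}\Pi_0$ and then collapses the $0$-dimensional part $W_0$ of $W$ onto $X^{(0)}$, with the aim of forcing the object set of $\Pi_1(X^H)|_{X^H_{(0)}}$ to be literally $\Pi(H)_0$ rather than merely equivalent to it.

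Your closing caveat is well taken and in fact sharper than the paper's treatment. You observe that the $0$-skeleton presheaf $H\mapsto (X^H)_{(0)}$ of any $G$-CW-complex is a coproduct of representables, whereas $\mathrm{Ob}\circ\Pi$ need not be, so in general one can only expect $\Pi_1(X;G)\simeq\Pi$ as a natural equivalence of groupoid-valued functors, not a strict isomorphism on objects. (Concretely: for $G=\ZZ_2$ with $\Pi(\{e\})$ the one-object trivial groupoid and $\Pi(\ZZ_2)=\varnothing$, no $G$-$0$-skeleton has $H$-fixed sets matching $\Pi(H)_0$ on the nose.) The paper's collapse step does not escape this; its Step~2 computation of $(X^{(0)})^H$ implicitly assumes that the object presheaf already behaves like a sum of representables. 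Since the statement only asserts $\Pi_1(X;G)\cong\Pi$ and the paper's own Definition~5.8 treats equivalences of groupoid-valued functors, your reading of $\cong$ as natural equivalence is the right one, and your argument delivers exactly that.
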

	\begin{proof}
		The proof is somewhat parallel to the one in the non-equivariant
		case. One has to be careful to work canonically (without choices) as one has
		to achieve compatibility between the different fixed point set data.
		
		To simplify, we make use of some well established constructions (classifying
		space/simplicial set of a small category and its geometric realization), we
		also make use of one very special case of the co-end construction, called
		``tensor product of space valued functors over the orbit category'' in
		\cite[Section~1]{DavisLueck}.
		
		Since the proof is rather long, we summarize here the steps we are going to follow:
		\begin{itemize}\itemsep=0pt
			\item\textbf{Step 1:} we first construct the 0-skeleton $X^{(0)}$ of the candidate $G$-CW-complex $X$;
			\item \textbf{Step 2:} we check that, for each subgroup $H$ of $G$, $\bigl(X^{(0)}\bigr)^{H}$ is in bijection with the units of $\Pi(H)$;
			\item \textbf{Step 3:} we construct the $G$-CW-complex $X$;
			\item \textbf{Step 4:} in order to check that, for each subgroup $H$ in $G$, we have that $\smash{\Pi_1\bigl(X^H\bigr)_{|(X^{(0)})^H}}= \Pi(H)$, we construct an intermediate subspace $Z$ of $X$ which allows to facilitate this computation and we do it;
			\item \textbf{Step 5:} finally, we check that for each morphism $\alpha$ in $\Hom_{\Orb}(H,K)$, the associated map $X^K\to X^H$ induces the groupoid morphism $\Pi(\alpha)$ under the identification of \smash{$\Pi_1\bigl(X^H\bigr)_{|(X^{0})^H}$} with $\Pi(H)$ and the similar one for the subgroup $K$.
		\end{itemize}
		
\textbf{Step 1.} The 0-skeleton of any $G$-CW-complex $X$ with fundamental
		groupoid functor $\Pi$ is directly determined by $\Pi$ itself, more precisely by the units of
		$\Pi(H)$ for the subgroups $H$ of $G$. Considering an orbit $G/H$ as a discrete
		topological space with $G$-action, the ``identity'' functor defines a covariant functor
		$E\colon \Orb(G)\to \GTOP$ sending $G/H$ to the discrete topological $G$-space $G/H$. Then we
		define (and are required to do so)
$X^{(0)}:= E\otimes_{\Orb}\Pi$.
		Concretely, by definition of $\otimes_{\Orb}$ in \cite{DavisLueck}, this is the (discrete) $G$-space obtained as quotient space
		\begin{equation}\label{eq:glue_X0}
			\bigdisjointunion_{H\in \operatorname{Ob}(\Orb(G))} G/H\times \Pi(H)_0 /{\sim},
		\end{equation}
		where the equivalence relation is generated by declaring for all morphisms
		$\alpha\in \Orb(G)$, say $\alpha\colon G/H\to G/K$, that
		\begin{equation*}
			(\alpha(gH), x) \sim (gH,\Pi(\alpha)(x))\qquad\forall x\in \Pi(K)_0.
		\end{equation*}
		Note that we write $\Pi(K)_0$ for the units in the groupoid $\Pi(K)$. The
		$G$-action is induced by the $G$-action on the orbits $G/H$, which is well
		defined because the maps $\alpha$ are $G$-equivariant.
		
\textbf{Step 2.} Let us show that this produces the desired $0$-skeleton of the space $X$ to construct. For this, we have
		to compute the fixed point sets say for the subgroup $H$ of $G$. It is
		straightforward to see that $gK\in G/K$ is fixed by $H$ if and only if
		$g^{-1}Hg\subset K$. At the same time, from such an element $gK$, we then get a well defined $G$-map
$G/H \to G/K$, $uH \mapsto ugK$
		which sends the coset $1H\in G/H$ to $gK\in G/K$, where $1$ denotes the unit in $G$. Thus, all the points~${(gK,x)\in G/K\times \Pi(K)_0}$ in $X^{(0)}$ which are fixed by $H$ are
		identified with a point in the single
		copy $\{1H\}\times \Pi(H)_0$ and therefore the $H$-fixed set of $X^{(0)}$ is
		a quotient of $\Pi(H)_0$. We are done once we have shown that no further
		identifications occur. This follows from the functoriality of $\Pi$:
		whenever two $H$-fixed points are identified with each other, they are also
		identified with a single point in $\{1H\}\times \Pi(H)_0$.
		
\textbf{Step 3.} As a next step, we produce spaces with the correct fundamental groupoids for
		the fixed point sets. For this, we rely on the well established
		construction of the classifying space (as simplicial set) of a small
		category: $|\Pi(H)|$ is the geometric realization of a simplicial set
		associated to the groupoid $\Pi(H)$ with a canonical
		identification \smash{$\Pi_1(|\Pi(H)|)|_{\Pi(H)|^{(0)}} = \Pi(H)$}. In~particular,
		for its zero skeleton we have $|\Pi(H)|^{(0)}=\Pi(H)_0$.
		
		Note, as a remark, that we can not glue together these spaces in the same
		way as we glued together the 0-skeleta in \eqref{eq:glue_X0} to produce the
		desired $G$-space as the gluing process could destroy the fundamental
		groups of the smaller fixed point sets. Instead, we have to carry out a homotopy version of the co-end
		construction which has to the correct $0$-skeleton to obtain the following $G$-space $W$:
		\[
			\Bigg(\bigdisjointunion_{H\in \operatorname{Ob}(\Orb(G))} G/H \times |\Pi(H)|\disjointunion
			\bigdisjointunion_{\alpha\colon G/H\to G/K \atop \in \Hom_{\Orb(G)}(H,K)} G/H\times [0,1] \times |\Pi(K)|\Bigg) \Big{/}{\sim},
		\]
		where the equivalence relation is now generated by a multiple mapping cylinder
		construction, gluing the ends of the spaces associated to morphisms
		appropriately to the spaces associated to the objects. Concretely, this is
		done as follows: let us use the
		notation $(gH,t,x)_\alpha$ to denote an element of the summand $G/H\times
		[0,1]\times |\Pi(K)|$ associated to $\alpha\colon G/H\to G/K\in
		\Hom_{\Orb(G)}$. Then we declare for all morphisms
		$\alpha\in \operatorname{Mor}(\Orb(G))$, say $\alpha\colon G/H\to G/K$
		\begin{alignat}{3}
				&(gH,0, x)_\alpha \sim (gH,\Pi(\alpha)(x)),\qquad&&\forall gH\in G/H,\quad x\in |\Pi(K)|,&\nonumber\\
				&(gH,1,x)_\alpha \sim (\alpha(gH),x), \qquad&&\forall gH\in G/H,\quad x\in |\Pi(K)|.&\label{eq:homotopy_glue}
		\end{alignat}
		
		Note that the $G$-space $W$ contains the $G$-subspace $W_0$ obtained when
		performing this ``homotopy co-end construction'' just to the 0-skeleton
		$|\Pi(\cdot)|^{(0)}=\Pi(\cdot)_0$ of $|\Pi(\cdot)|$, namely{\samepage
		\begin{equation*}
			W_0:= \Bigg(\bigdisjointunion_{H\in \atop \operatorname{Ob}(\Orb(G))} G/H \times \Pi(H)_0\disjointunion
			\bigdisjointunion_{\alpha\colon G/H\to G/K\atop\in \Hom_{\Orb(G)}(H,K)} G/H\times [0,1] \times \Pi(K)_0\Bigg) \Big{/}{\sim}
		\end{equation*}
		with the same equivalence relation as in~\eqref{eq:homotopy_glue}.}
		
		Note that there is the evident projection map $p\colon W_0\to X^{(0)}$ sending
		the class of $(gH,x)$ in~$W$ to the class of $(gH,x)$ in $X^{(0)}$ and sending
		the class of $(gH,t,x)_\alpha$ to the class of $(gH,x)$.
		
		Now we define the desired $G$-CW-complex $X$ by identifying in $W$ all the
		points in $W_0$ with their image points under the surjective map $p$, i.e., as
		the pushout
		\begin{equation*}
			\xymatrix{W_0\ar@{^{(}->}[r]\ar[d]^p & W\ar[d]\\
				X^{(0)}\ar[r]& X.}
		\end{equation*}
		
\textbf{Step 4.} We next have to analyze $X$ and compute in particular $\Pi_1(X;G)$. In order to do that, we define an intermediate $G$-CW-complex $Z$ in the following way.
		Note that the space $W$ contains canonically as subspace
		$\bigdisjointunion_{H\in\Orb(G)} G/H\times |\Pi(H)|$. We therefore obtain as
		a subspace of $X$ the pushout
		\begin{equation*}
			\xymatrix{W_0\ar@{^{(}->}[r]\ar[d]^p & W_0 \cup \bigg(\displaystyle\bigdisjointunion_{H\in\Orb(G)} G/H\times |\Pi(H)|\bigg)\ar[d]\\
				X^{(0)}\ar[r]& Z,}
		\end{equation*}
		where we glue the union of the $G/H\times |\Pi(H)|$ along their $0$-skeleta
		with identifications.
		Observe that we obtain the same space by simply attaching the spaces $|\Pi(H)|$ to the already constructed
		$0$-skeleton $X^{(0)}$ as the following pushout:
		\begin{equation*}
			\xymatrix{
				\displaystyle\bigdisjointunion_{H\in\mathcal{ALL}} G/H \times |\Pi(H)|^{(0)} \ar@{^{(}->}[r]\ar[d]^(.6){p}&
				\displaystyle\bigdisjointunion_{H\in\mathcal{ALL}} G/H \times |\Pi(H)|\ar[d]\\
				X^{(0)} \ar[r]& Z.
			}
		\end{equation*}
		In particular, it is clear that the $0$-skeleton of $X$ is the $0$-skeleton of $Z$,
		i.e., precisely $X^{(0)}$.
		
		Observe that all $1$-cells of $X$ are already contained in the subspace
		$Z$. This space $Z$ is obtained from a disjoint union by identifying along
		$0$-cells. The fundamental groupoids of $X$ and of the
		subspaces $X^H$, for $H$ subgroup of $G$, are then generated by the arrows given by $1$-cells, which are all
		contained in the constituent subspaces of $Z$. By the van Kampen theorem, the
		relations in the fundamental groupoid are then generated precisely by the
		$2$-cells. There are two types of $2$-cells: first the $2$-cells contained in the constituent
		subspaces of $Z$, giving rise to the fundamental groupoids $\Pi(H)$ of
		$|\Pi(H)|$ and second $2$-cells $\{gH\}\times [0,1]\times c_\alpha$ for~${\alpha\colon G/H\to G/K}$ and $1$-cells $c_\alpha$ in $|\Pi(K)|$.
		
		Using these considerations, let us now compute for a subgroup $H$ of $G$ the
		fundamental groupoid \smash{$\Pi_1\bigl( X^H\bigr)|_{X^H_{(0)}}$} of the $H$-fixed set. We follow the
		arguments which lead to the identification of
		\begin{equation*}
			\bigl(\Pi_1\bigl(\bigl(X^{(0)}\bigr)^H\bigr)|_{X^{(0)}}\bigr)_0=\bigl(\Pi_1 \bigl(X^H\bigr)|_{ (X^{(0)} )^H}\bigr)_0=\Pi(H)_0.
		\end{equation*}
		
		Indeed, the $H$-fixed set of $Z$ is given as pushout
		\begin{equation*}
			\xymatrix{ (W_0)^H \ar@{^{(}->}[r]\ar[d]^p & (W_0)^H \cup
				\Bigg(\displaystyle\bigdisjointunion_{K\in\Orb(G);\atop \{gK
					\mid
					g^{-1}Hg\subset K\}} gK\times |\Pi(K)|\Bigg)\ar[d]\\
				\bigl(X^{(0)}\bigr)^H \ar[r]& Z^H}
		\end{equation*}
		with one $1$-cell for each $1$-cell $\{gK\}\times c$ in $ \{gK\}\times
		|\Pi(K)|^H$ for each
		subgroup $K$ of $G$ and coset~${gK\in G/K}$ with $g^{-1}Hg\subset K$ and
		$1$-cell $c$ of $|\Pi(K)|$.

		 \textbf{Step 5.} Now, each such $gK$ gives rise to $\alpha\colon G/H\to G/K$; $uH\mapsto ugK$ as
		above which finally gives rise to a $2$-cell in $X^H$ which identifies the
		class of $\{gK\}\times c$ and of $\{1H\}\times \alpha^*(c)$ in~$\Pi_1\bigl( Z^H\bigr)$. As
		before, the other morphisms in $\Orb(G)$ do not add further relations by
		functoriality of~$\alpha$. Next, the $2$-cells in $|\Pi(H)|$ add precisely
		the required further relations which imply that
		\smash{$\Pi_1\bigl(X^H\bigr)|_{X^H_{(0)}}=\Pi(H)$}. Note that the fact that $\alpha^*\colon \Pi_1(|\Pi(K)|)\to \Pi_1(|\Pi(H)|)$ are groupoid
		homomorphisms implies that no further relations will be created.
		We obtain the desired result~$\smash{\Pi_1\bigl(X^H\bigr)|_{X^H_{(0)}}=\Pi(H)}$ with a canonical
		identification.
		
		From the construction, we also obtain that the induced map coming from a morphism
		$\alpha\colon G/H\to G/K$ of the form $uH\mapsto ugK$, given by fixed
		point inclusion and translation by~${g\in G}$ produces on the fundamental
		groupoids of the fixed point sets $X^K$ and $X^H$ exactly the morphism~$\Pi(\alpha)$.
		
		To summarize, we have constructed the $G$-CW-complex $X$ such that $\Pi_1(X;G)=\Pi$ exactly as required.
	\end{proof}
	
\subsection{Construction of a universal space}\label{sec:universal_space}
	
In the following, given a functor $\Pi\colon \Orb(G)\to \operatorname{groupoid}$ (thought of
	as an abstract fundamental groupoid functor) we construct a universal space
	$B\Pi$ such that canonically $\Pi_1(B\Pi;G)=\Pi$, but all possible higher
	homotopy groups vanish. It has the universal property that there is a
	bijection between algebraic maps between fundamental groupoid functors and
	homotopy classes of maps between $G$-spaces, compare Proposition~\ref{universalspace}.
	
We start with the $G$-CW-complex $X$ of Proposition~\ref{prop:realize_Pi} such
	that canonically $\Pi_1(X;G)=\Pi$.
	Inductively on $k\ge 3$ we construct larger $G$-CW-complexes $X_k$
	by attaching $G$-equivariant $k$-cells to $X_{k-1}$ to kill $\pi_{k-1}$. We start
	by setting $X_2:= X$.
	
	We assume as induction hypothesis that for each subgroup $H$ of~$G$ for each
	component of~$X_{k-1}^H$ the $\pi_j$-th homotopy groups vanishes for $2\le
	j<k-2$. Note that for $k=3$ this is an empty condition and hence the induction
	start for $k=3$ is trivially satisfied.
	
	Then, for each subgroup $H$ of $G$ we first attach
	cells $H/H\times D^k$ of dimension $k$ to \smash{$X_{k-1}^H$} in order to make
	$\pi_{k-1}\bigl(X^H\bigr)$
	trivial. To actually remain in the world of $G$-CW-complexes we induce these
	attaching constructions up and attach $G/H\times D^k$ to $G\cdot
	X_{k-1}^H\subset X_{k-1}$. Note that these $G$-$k$-cells affect also other
	fixed point sets, but there will be no change of $\pi_j$ for~${j<k-1}$, and
	$\pi_{k-1}$ can only get smaller, but as we make sure that it is trivial,
	this property will not be affected. After attaching all these $G$-cells we
	therefore get a new $G$-CW-complex $X_k$ containing~$X_{k-1}$ such that~${\pi_j\bigl(X_k^H,x\bigr)=0}$ for all subgroups $H$ of $G$ and for all $2\le j\le
	k-1$ and for all basepoints~${x\in X_k^H}$.
	
	\begin{Definition}
		The union of all $X_k$ (with the colimit topology) is then called $B\Pi$.
	\end{Definition}
	
	It has the following characteristic property.

	\begin{Lemma}
		The $G$-space $B\Pi$ just constructed contains $X$ as a subcomplex and has the
		same $2$-skeleton as $X$, and
		satisfies that $\pi_j\bigl(B\Pi^H,x\bigr)=0$ for all $j\in\NN$ and for all subgroups
		$H$ of $G$ and all $x\in B\Pi^H$.
	\end{Lemma}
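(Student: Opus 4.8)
The plan is to verify the three assertions in turn, dispatching the two structural ones immediately and then concentrating on the vanishing of the higher homotopy groups. For the structural claims, recall from the construction that $X=X_2$ and that, for each $k\ge 3$, the space $X_k$ is obtained from $X_{k-1}$ by attaching $G$-cells $G/H\times D^k$ of dimension $k\ge 3$. Hence each $X_{k-1}$ is a $G$-subcomplex of $X_k$, and $B\Pi=\bigcup_{k\ge 2}X_k$ with the colimit topology contains $X=X_2$ as a $G$-subcomplex. Since every attached cell has dimension $\ge 3$, the $2$-skeleton is never altered, so $(B\Pi)^{(2)}=X_2^{(2)}=X^{(2)}$; as $X$ is $2$-dimensional this $2$-skeleton is $X$ itself. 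This settles the first two claims.

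For the homotopy statement fix a subgroup $H$ of $G$. The key reduction is that passing to $H$-fixed points turns the equivariant cell structure into an ordinary one: for each $G$-cell one has $(G/K\times D^k)^H=(G/K)^H\times D^k$, a disjoint union of ordinary $k$-cells indexed by the cosets $gK$ with $g^{-1}Hg\subseteq K$. Consequently every $X_k^H$ is a CW-complex, each inclusion $X_{k-1}^H\hookrightarrow X_k^H$ is that of a subcomplex, and $B\Pi^H=\bigcup_{k}X_k^H$. By the usual compactness argument---every map $S^j\to B\Pi^H$ and every homotopy between such maps has compact image and therefore lands in some finite stage $X_k^H$---homotopy groups commute with this union, so for every basepoint $x$ one has $\pi_j(B\Pi^H,x)=\dirlim_k \pi_j(X_k^H,x)$.

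It then remains to show that this colimit vanishes for $j\ge 2$. By the inductive construction, once $X_k$ has been formed one has $\pi_j(X_k^H,x)=0$ for all $2\le j\le k-1$. Moreover, in forming $X_m$ from $X_k$ with $m>k$ one attaches to $X^H$ only ordinary cells of dimension $\ge k+1$, and attaching cells of dimension $\ge k+1$ leaves $\pi_j$ unchanged for $j\le k-1$; hence the vanishing persists at all later stages. Thus for each fixed $j\ge 2$ the groups $\pi_j(X_k^H,x)$ are zero for all $k\ge j+1$, the colimit is $0$, and so $\pi_j(B\Pi^H,x)=0$ for every subgroup $H$ and basepoint $x$. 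Since no cell of dimension $\le 2$ is ever attached, $\pi_1(B\Pi^H,x)=\pi_1(X^H,x)$ still recovers $\Pi(H)$, which is why it is precisely the \emph{higher} homotopy groups that vanish.

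The one genuinely equivariant point---and the step I expect to demand the most care---is the dimension bookkeeping used above: a single $G$-cell $G/K\times D^m$ contributes to \emph{every} fixed-point set $X^H$ only ordinary cells of the \emph{same} dimension $m$, so that killing $\pi_{k-1}$ of one fixed-point set at stage $k$ can neither change $\pi_j$ for $j<k-1$ nor enlarge $\pi_{k-1}$ in any other fixed-point set. This is exactly the per-stage assertion already justified during the construction, and I would invoke it rather than re-derive it; it is where the availability of enough room---attaching only cells of dimension $\ge k$---is essential.
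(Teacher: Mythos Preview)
The paper does not supply a proof of this lemma at all; it is stated immediately after the construction of $B\Pi$ as a direct consequence of that construction, and the paper moves on. Your argument correctly fills in the details that the paper leaves implicit, and the structure---checking the skeletal claims from the cell-attachment pattern, then passing to fixed points and using compactness to compute $\pi_j$ as a colimit over the stages $X_k^H$---is exactly the standard route and matches what the paper's inductive construction is designed to produce.

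One small factual slip: you assert that ``$X$ is $2$-dimensional'', but the space $X$ produced in Proposition~\ref{prop:realize_Pi} is in general not $2$-dimensional---it is built from the classifying spaces $|\Pi(H)|$ and mapping cylinders, and the paper's own example computes a nonzero $H^3(X;\ZZ)$. This does not affect the lemma, which only claims that $B\Pi$ and $X$ share the same $2$-skeleton (true because only cells of dimension $\ge 3$ are attached in passing from $X_2=X$ to $B\Pi$), but you should drop the extra clause identifying that $2$-skeleton with $X$ itself. Also, your reading of the conclusion as concerning $j\ge 2$ is the intended one; the literal ``for all $j\in\NN$'' in the statement is surely a slip, since $\pi_1(B\Pi^H)$ recovers $\Pi(H)$, as you note.
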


	This classifying space
	$B\Pi$ has the following universal property.
	
	\begin{Proposition}\label{universalspace}
		For every $G$-CW-complex $Y$ and for every natural transformation $\Phi\colon
		\Pi_1(Y;\allowbreak G)\to \Pi$, there exists, unique up to $G$-equivariant
		homotopy, a $G$-equivariant cellular map
		$\varphi\colon Y\to B \Pi$
		such that
		$\varphi_{\#}=\Phi$.
	\end{Proposition}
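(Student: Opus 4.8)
The plan is to prove existence and uniqueness separately, both via the equivariant Whitehead machinery (Proposition~\ref{whitehead}), using crucially that $B\Pi^H$ is weakly contractible in degrees $\ge 3$ and has $2$-skeleton equal to that of $X$.

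\emph{Existence.} First I would reduce the statement on $B\Pi$ to a statement about $X$. The key point is that, by construction, $B\Pi$ agrees with $X$ on $2$-skeleta, and $X=X_2$ carries the canonical identification $\Pi_1(X;G)=\Pi$ from Proposition~\ref{prop:realize_Pi}. Since the fundamental groupoid functor depends only on the $2$-skeleton, it suffices to produce a cellular $G$-map $\varphi\colon Y\to B\Pi$ inducing $\Phi$ on $\Pi_1(\cdot;G)$. I would build $\varphi$ by equivariant skeletal induction over $Y$. On the $0$-skeleton, $\Phi$ prescribes where the objects (units of each $\Pi_1(Y^H)_0$) must go, namely to the corresponding units in $\Pi(H)=\Pi_1(X^H)_0$, and this is forced cell by cell and compatible across the orbit category by naturality of $\Phi$. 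On the $1$-skeleton, each equivariant $1$-cell is sent along a path representing the image arrow $\Phi(H)$ of its homotopy class; naturality of $\Phi$ guarantees these choices are consistent under the fixed-point inclusion and translation maps $X^K\to X^H$. On the $2$-skeleton, each relation in $\Pi_1(Y^H)$ maps to a relation in $\Pi(H)$ because $\Phi(H)$ is a groupoid homomorphism, so the attaching sphere of each equivariant $2$-cell bounds in $B\Pi$ and the map extends. For $k\ge 3$, the obstruction to extending over each equivariant $k$-cell $G/H\times D^k$ lives in $\pi_{k-1}(B\Pi^H)$, which vanishes by the defining property of $B\Pi$; hence the extension exists at every higher stage and assembles to the desired $\varphi$ with $\varphi_\#=\Phi$.

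\emph{Uniqueness.} Suppose $\varphi_0,\varphi_1\colon Y\to B\Pi$ are two cellular $G$-maps with $\varphi_{0\#}=\varphi_{1\#}=\Phi$. I would produce a $G$-homotopy between them by the relative version of the same induction, now over $Y\times I$ relative to $Y\times\{0,1\}$, equivalently by using Proposition~\ref{whitehead} directly with $A=Y\times I$. Agreement of the induced maps on $\Pi_1(\cdot;G)$ lets one build the homotopy on the $2$-skeleton of $Y\times I$: the two maps agree up to homotopy on $0$- and $1$-cells because they induce the same arrows, and the $2$-cell compatibility again follows from $\Phi$ being a groupoid homomorphism. Above dimension~$2$ the relative obstructions again lie in $\pi_*(B\Pi^H)=0$, so the homotopy extends over all higher cells. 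Phrased cleanly, the inclusion of the $2$-skeleton $B\Pi^{(2)}=X^{(2)}\hookrightarrow B\Pi$ is $\nu$-connected for $\nu$ unbounded above $2$ on every fixed point set, so Proposition~\ref{whitehead} applied fiberwise over $Y\times I$ yields bijectivity of the relevant $G$-homotopy class maps and hence uniqueness.

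The main obstacle is bookkeeping the \emph{naturality across the orbit category} rather than any single homotopy-theoretic step: one must check that the cell-by-cell choices made on $Y^H$ are compatible, for every morphism $\alpha\colon G/H\to G/K$ in $\Orb(G)$, with the choices on $Y^K$ under the induced map $Y^K\to Y^H$, and that this compatibility is exactly what $\Phi$ being a natural transformation encodes. The higher extension and homotopy steps are then automatic from $\pi_j(B\Pi^H)=0$ for $j\ge 3$; the only genuinely delicate verification is that the $2$-dimensional data (objects, generating arrows, and relations) transform correctly under all of $\Orb(G)$ simultaneously, which is where the equivariant setting differs essentially from the classical $B\Gamma$ case.
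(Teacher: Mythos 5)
Your proposal is correct and follows essentially the same route as the paper: skeletal induction defining the map on $0$-cells via the units, on $1$-cells via representatives of the image arrows $\Phi(H)(\gamma)$, on $2$-cells using that $\Phi(H)$ is a groupoid homomorphism so attaching loops stay null-homotopic, and on higher cells via the vanishing of $\pi_j\bigl(B\Pi^H\bigr)$ for $j\ge 2$, with uniqueness by the relative version of the same induction. The paper's proof is exactly this argument, including your observation that the only delicate point is the compatibility of the cell-by-cell choices across the orbit category, which is handled by the naturality of $\Phi$.
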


	\begin{proof}
		We begin by defining the map $\varphi$ on the $G$-equivariant $0$-skeleton of $Y$ by putting
		$\varphi_{|Y_{(0)}}:=\smash{\Phi(\{e\})_{|Y_{(0)}}}$.
		Now, we proceed to define $\varphi$ on the $1$-skeleton of $Y$. For a $G$
		$1$-cell
		$c$ of the form $G/H\times [0,1]$ (with isotropy $H$) pick the ordinary
		$1$-cell $c_H:=H/H\times D^1$ contained in $Y^H$. Then $c_H$
		defines (if we choose an orientation of it) an element \smash{$\gamma \in
		\Pi_1\bigl(Y^H\bigr)_{|Y^H_{(0)}}$}. Define now \smash{$\phi\colon c_H\to \bigl(B\Pi^H\bigr)^{(1)}$} such that
		this map represents the element $\Phi(H)(\gamma)$ in
		$\Pi_1\bigl(B\Pi^H\bigr)|_{ (B\Pi^{ (0)} )^H}$. Since $H$ acts trivially on the target, this extends
		uniquely to a $G$-map on the $G$-cell $c$ with values in the
		$1$-skeleton of $B\Pi$. Of course, we could have picked a different base cell
		in the $G$-cell $c$. But because $\Phi$ is a natural transformation, the
		resulting map is independent of this ---up to the choice of the representative in the
		homotopy class of $\Phi(H)(\gamma)$ which had already to be made anyway.
		This defines the map $\phi$ on the $1$-skeleton of $Y$.
		
		To extend $\phi$ to $Y^{(2)}$, let $c$ be a $G$-$2$-cell of the form
		$G/H\times D^2$. Pick the corresponding ordinary $2$-cell $H/H\times
		D^2$ (with isotropy $H$)
		which is contained in $Y^{H}$. Its attaching map~${\psi\colon S^1\to Y^{H}}$
		is obviously contractible in $Y^{H}$ and hence trivial (i.e., a unit) in
		the fundamental groupoid~\smash{$\Pi_1\bigl(B\Pi^H\bigr)|_{ (B\Pi^{ (0)})^H}$} (we conjugate with a path in
		$Y^H$ to the $0$-skeleton to make~$\psi$
		represent an element of \smash{$\Pi_1\bigl(Y^H\bigr)_{B\Pi^H_{(0)}}$}, the triviality does not
		depend on the choice of such a path). Consequently, $\Phi(H)$ being a map of
		groupoids, also the image under $\Phi(H)$ of this element of the fundamental
		groupoid is trivial (a unit). By construction of $\phi$ on the $1$-skeleton,
		this image element is represented by $\phi\circ\psi$ (up to the chosen
		conjugation with a path to the $0$-skeleton), which is hence contractible in
		$B\Pi^H$. Extend $\phi$ over $H/H\times D^2$ using this contraction and then
		extend it $G$-equivariantly to $c=G/H\times D^2$.
		
		Inductively, we then extend $\phi$ over the $k$-skeleta of $Y$. The extension
		property now follows because each attaching map has contractible image by the
		vanishing of all higher homotopy groups of all components of all fixed point
		sets of $B\Pi$ for the various subgroups of $H$.
		
		By the very construction of $\phi$ on the $1$-skeleton, we have
		$\varphi_{\#}=\Phi$, because the morphism sets of
		\smash{$\Pi(H)=\Pi_1\bigl(B\Pi^H\bigr)|_{ (B\Pi^{ (0)})^H}$} are
		generated by the $1$-cells.
		
		The last step of the proof concerns uniqueness. Choose therefore a
		$G$-equivariant map
		$\varphi'\colon Y \to B \Pi$ such that
		$\varphi'_{\#}=\Phi$. We have to show that $\varphi'$ is $G$-homotopic to
		$\varphi$.
		
		It is immediate from the definition that if $\varphi'_{\#}=\varphi_{\#}$, then
		their restrictions to $Y_{(0)}$ are equal. The construction of the desired
		$G$-homotopy is now done inductively over the skeleta and follows the pattern
		in the non-equivariant case and for the construction part, making use of the
		condition that $\phi_\#'=\phi_\#$ for the extension over the $1$-skeleton and
		of the vanishing of higher homotopy groups for the further extensions.
	\end{proof}
	
	\begin{Example}
		Consider the 2-dimensional torus $\mathbb{T}^2=\mathbb{S}^1\times\mathbb{S}^1$ with the following CW-complex structure:
		the first factor is given by two vertices $v_1$ and $v_2$; two edges
		$e_1$ and $e_2$ both with extremes $v_1$ and $v_2$ (this gives the
		first factor); two loops $l_1$ and $l_2$ attached to $v_1$ and $v_2$
		respectively (these give two copies of the second factor); finally two
		2-cells $c_1$ and $c_2$ suitably attached, the first one to $e_1$,
		$l_1$, $e_1^{-1}$, and $l_2$, the second one to $e_2$, $l_1$,
		$e_2^{-1}$, and $l_2$.
		Let then $\ZZ_2$ act on $\mathbb{T}^2$ by flipping the first factor (in
		particular, swapping $e_1$ and $e_2$) and also $c_1$ and $c_2$,
		and then fixing $l_1$ and $l_2$.
		
		\begin{figure}[ht]
			\centering
			\includegraphics[width=47mm]{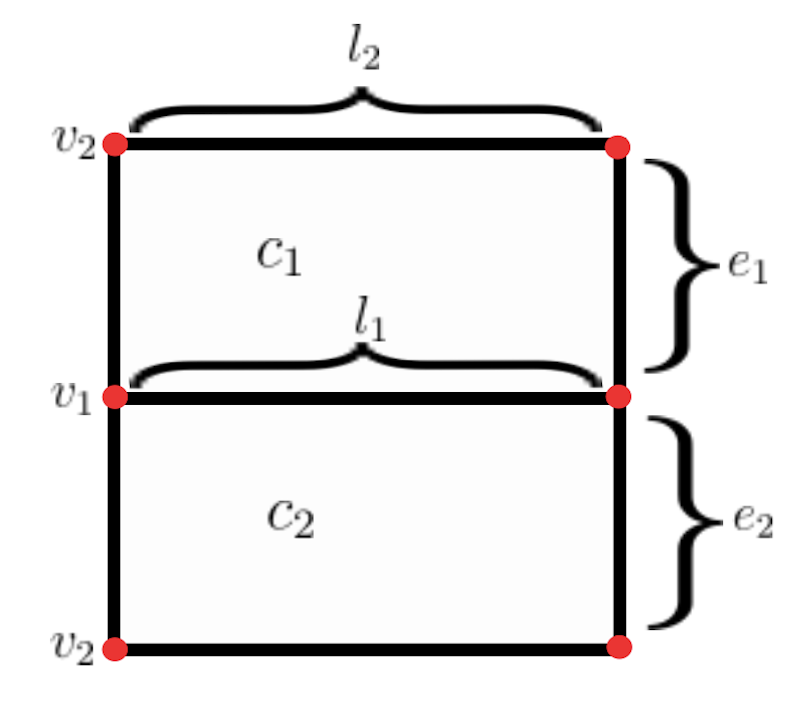}
\vspace{-3mm}

\caption{Torus with action of $\ZZ_2$.}\label{Example}
		\end{figure}
	
In Figure~\ref{Example}, we represent the torus as a square where we identify opposite sides.
		Because the fixed-point sets are the
		aspherical spaces
		$\mathbb{S}^1$ for $\mathbb{Z}_2$ and $\mathbb{T}^2$ for $\{e\}$, this is a
		classifying space for the functor $\Pi_1\bigl(T^2;\ZZ_2\bigr)$, isomorphically given as
	$\Pi\colon \Orb(\ZZ_2)\to \operatorname{Groupoids}$
		which is defined as follows.
		We have to specify which groupoid it assigns to each of the two possible
		$\mathbb{Z}_2$-orbits, namely to the trivial orbit $\ZZ_2/\ZZ_2$
		and the free orbit $\ZZ_2/\{0\}$. Moreover, in the category $\Orb(\ZZ_2)$
		there are only two non-identity morphisms, the collapse map $\ZZ_2/\{0\}\to
		\ZZ_2/\ZZ_2$ and the non-identity bijection $\tau\colon \ZZ_2/\{0\}\to
		\ZZ_2/\{0\}$ and we have to specify the associated morphisms of groupoids.\looseness=-1
		
		Concretely, we define $\Pi$ as follows:
		\begin{itemize}\itemsep=0pt
			\item We assign to $\ZZ_2$ the groupoid $\ZZ\times
			\{v_1,v_2\}$ over $\{v_1,v_2\}$, where $\{S\}$ denotes the trivial groupoid of
			the set $S$ with only identity morphisms.
			\item We assign to $\{0\}\subset \ZZ_2$ the group $\ZZ^2$.
			\item We assign to the collapse map $\ZZ_2/\{0\}\to \ZZ_2/\ZZ_2$ the morphism
			of groupoids which first projects $\ZZ\times \{v_1,v_2\}$ to the factor
			$\ZZ$ and then injects $\ZZ$ into $\ZZ^2$ as the subgroup $\{0\}\times
			\ZZ$. Note that, if we identify $\ZZ^2$ with the fundamental group of the
			torus $T^2$ we constructed above, we think of this as the subgroup generated
			by each of the two loops $l_1$, or $l_2$ (the two loops are homotopic).
			\item We assign to the non-identity map $\tau\colon \ZZ_2/\{0\}\to \ZZ_2/\{0\}$
			the automorphism of $\ZZ^2$ which sends the generator of the first factor
			to its inverse and the second generator to itself.
		\end{itemize}
		
		Note that indeed this is precisely the fundamental groupoid of the
		$\ZZ_2$-space $T^2$ with the chosen CW-structure, and by the contractibility
		of $S^1$ and $T^2$ it is a model for $B\Pi_1\bigl(T^2;\ZZ_2\bigr)$.
		
		It is a nice exercise to carry out the construction of the space $X$ of
		Proposition
		\ref{prop:realize_Pi} for this~$\Pi$. One observes that one attaches quite
		a few $2$-cells and $3$-cells and that way produces a~space~$X$ not homotopy equivalent to $\mathbb{T}^2$. Specifically,
		the $0$-skeleton of $X$ is the $0$-skeleton of~$T^2$. To obtain the
		$2$-skeleton, we use the classifying spaces $T^2$ of $\Pi(\{0\})=\ZZ^2$ and
		$S^1\times \{v_1,v_2\}$ of~${\Pi(\ZZ_2)=\ZZ\times \{v_1,v_2\}}$.
		
		By construction of $X$, we have to take the disjoint union of the $2$-skeleta of
		$\ZZ_2/\{0\}\times T^2$ and of $\ZZ_2/\ZZ_2\times S^1\times \{v_1,v_2\}$ which
		is the disjoint union of two copies of $T^2$ and of $S^1$. To this, we~have to
		glue $4$ cylinders for the $4$ morphisms in $\Orb(\ZZ_2)$
		\begin{gather*}
				 \ZZ_2/\{0\}\times [0,1]\times T^2,\qquad\ZZ_2/\{0\}\times [0,1]\times T^2,
				\qquad \ZZ_2/\{0\}\times [0,1]\times S^1\times \{v_1,v_2\}, \\ [0,1]\times S^1\times
				\{v_1,v_2\}.
		\end{gather*}
		Gluing in the first cylinder, corresponding to $\id_{\ZZ_2/\{0\}}$, produces
		$\ZZ_2/\{0\}\times T^3$. The second cylinder produces another copy of $T^3$,
		glued with the previous two along embedded copies of $T^2$. The third
		cylinder, corresponding to the
		collapse map, homotopically and $\ZZ_2$-equivariantly glues the $2$ copies of
		$S^1$ into this space (without changing the homotopy type). The forth cylinder, corresponding to the identity of $\ZZ_2/\ZZ_2$,
		glues in two more copies of $T^2$ into the space obtained so far along a
		homotopically non-trivial circle in each. From the cellular chain complex we
		can read off that $H^3(X;\ZZ)\cong \ZZ^3$, generated by the
		fundamental classes of the three copies of $T^3$ the first two cylinders
		produced. In particular, this space is not homotopy equivalent to $T^2$.
\end{Example}

	\begin{Remark}\label{rem:BG}
		Observe that when $G=\{e\}$ is the trivial group and $\Pi(\{e\})$ is a
		discrete group $\Gamma$
		(a groupoid with only one object) then $B\Pi$ is a standard
		$CW$-complex with a single $0$-cell. It follows that the space obtained in
		this way is an Eilenberg--Mac~Lane space $B\Gamma=K(\Gamma,1)$.
	\end{Remark}
	
	Now we combine the results obtained in this section and Theorem~\ref{G2conn},
	obtaining as a corollary the fact that the Stolz $G$-equivariant $R$-groups
	depend only on the equivalence class of the fundamental groupoid functor.
	
	Here, we define a natural equivalence between two groupoid valued functors as
	follows.
	\begin{Definition}\label{def:eq_of_fct}
		Let $ \mathcal{C}$ be a small category and $F,G\colon \mathcal{C}\to Groupoid$ be two
		functors. Let~$T$ be a natural transformation between $F$ and $G$. We say
		that $T$ is an \emph{equivalence} if and only if for each object $c$ of $ \mathcal{C}$,
		the functor $T(c)\colon F(c)\to G(c)$ between the groupoids $F(c)$ and~$G(c)$ is an equivalence of groupoids. Recall that the latter condition means that there
		are functors~${S(c)\colon G(c)\to F(c)}$ and the compositions $T(c)\circ S(c)$
		and $S(c)\circ T(c)$ admit natural transformations to the identity functor.
	\end{Definition}
	
	\begin{Remark}\label{rem:grp_equv}
Let $T\colon \mathcal{G}\to \mathcal{H}$ be a morphism of groupoids.
Recall the standard two facts:
		\begin{enumerate}\itemsep=0pt
			\item[(1)] If $T$ is an equivalence then $T$ induces a bijection between the sets of orbits and for
			each unit $x$ of $\mathcal{G}$ an isomorphism of isotropy groups
			\smash{$\mathcal{G}_x^x\to \mathcal{H}_{T(x)}^{T(x)}$}.
			\item[(2)] If $\mathcal{G}$ has a single orbit then for each object $x$ of $\mathcal{G}$ the
			inclusion $\mathcal{G}_x^x\to \mathcal{G}$ induces an equivalence between the isotropy group
			of $x$ in $\mathcal{G}$ (considered as a groupoid with a single object) and the full groupoid $\mathcal{G}$.
		\end{enumerate}
	\end{Remark}
	
	\begin{Lemma}\label{lem:groupoid_eq}
		Let $T\colon \mathcal{G}\to \mathcal{H}$ be a morphism of groupoids such that it induces a
		bijection between the sets of orbits and for each object $x\in \mathcal{G}$ an
		isomorphism of isotropy groups \smash{$\mathcal{G}_x^x\to \mathcal{H}_{T(x)}^{T(x)}$}. Then~$T$
		is an equivalence of groupoids.
	\end{Lemma}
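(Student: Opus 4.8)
The plan is to prove the lemma by verifying the two classical criteria for a functor between groupoids to be an equivalence, namely that $T$ is \emph{essentially surjective} and \emph{fully faithful}, and then to invoke the standard characterization of equivalences (as in Remark~\ref{rem:grp_equv}) to produce a quasi-inverse satisfying Definition~\ref{def:eq_of_fct}. The whole argument rests on the elementary observation that, in a groupoid, two objects are isomorphic precisely when they lie in the same orbit (connected component).

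First I would dispose of essential surjectivity, which is immediate: since the induced map on orbit sets is surjective, every object of $\mathcal{H}$ lies in the same orbit as some $T(x)$, hence is isomorphic to $T(x)$ in $\mathcal{H}$. Next comes the heart of the argument, full faithfulness, i.e.\ that $\Hom_{\mathcal{G}}(x,y)\to\Hom_{\mathcal{H}}(T(x),T(y))$ is a bijection for all objects $x,y$ of $\mathcal{G}$. I would split into two cases according to whether $x$ and $y$ lie in the same orbit. If they lie in different orbits, then $\Hom_{\mathcal{G}}(x,y)=\varnothing$; because $T$ is \emph{injective} on orbits, $T(x)$ and $T(y)$ also lie in different orbits, so $\Hom_{\mathcal{H}}(T(x),T(y))=\varnothing$ as well and the map is trivially a bijection.

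If instead $x$ and $y$ lie in the same orbit, I would pick an arrow $a\colon x\to y$ in $\mathcal{G}$. Left composition with $a$ gives a bijection $\mathcal{G}_x^x\to\Hom_{\mathcal{G}}(x,y)$, $b\mapsto a\circ b$ (with inverse $c\mapsto a^{-1}\circ c$), and left composition with $T(a)\colon T(x)\to T(y)$ gives the analogous bijection $\mathcal{H}_{T(x)}^{T(x)}\to\Hom_{\mathcal{H}}(T(x),T(y))$. By functoriality, $T(a\circ b)=T(a)\circ T(b)$, so these two bijections intertwine the map on hom-sets with the isotropy map $\mathcal{G}_x^x\to\mathcal{H}_{T(x)}^{T(x)}$; since the latter is an isomorphism by hypothesis, the map on hom-sets is a bijection too. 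This establishes full faithfulness.

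Having both properties, I would conclude via the classical theorem that a fully faithful, essentially surjective functor is an equivalence of categories: choosing, for each object of $\mathcal{H}$, a preimage object under $T$ together with a connecting isomorphism yields a quasi-inverse $S$ and natural isomorphisms $TS\cong\id$ and $ST\cong\id$, exactly as required by Definition~\ref{def:eq_of_fct}. I do not anticipate any genuine obstacle; the only point demanding a little care is the same-orbit case of full faithfulness, where one must check that the auxiliary arrow $a$ really transports the isotropy isomorphism to the desired bijection of hom-sets (the outcome being visibly independent of the choice of $a$, since any two choices differ by an element of $\mathcal{G}_x^x$).
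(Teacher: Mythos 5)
Your proof is correct. It does, however, take a different route from the paper's, which is a one\mbox{-}line diagram argument: the paper forms the ``skeleta'' $\bigsqcup_{[x]\in\pi_0(\mathcal{G})}\mathcal{G}^x_x$ and $\bigsqcup_{[x]\in\pi_0(\mathcal{H})}\mathcal{H}^{T(x)}_{T(x)}$, observes that the map between them induced by $T$ is an isomorphism (bijection on orbits plus isomorphisms of isotropy groups), that the vertical inclusions into $\mathcal{G}$ and $\mathcal{H}$ are equivalences by Remark~\ref{rem:grp_equv}(2) together with the fact that a disjoint union of equivalences is an equivalence, and then concludes that the bottom arrow $T$ of the resulting commutative square is an equivalence by the two-out-of-three property. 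You instead verify directly that $T$ is essentially surjective (from surjectivity on orbits) and fully faithful (empty hom-sets matched via injectivity on orbits; in the same-orbit case, conjugating the isotropy isomorphism by the bijections ``compose with $a$'' and ``compose with $T(a)$''), and then invoke the classical criterion to build a quasi-inverse. The underlying content --- reduction to orbits and isotropy groups --- is identical, but your version is more self-contained, carrying out explicitly the hom-set bookkeeping that the paper delegates to the standard facts recalled in Remark~\ref{rem:grp_equv}, while the paper's version is shorter granted those facts. Both arguments require a choice of orbit representatives or preimages at some stage, so neither is more constructive than the other.
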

	\begin{proof}
		The assertion follows directly from the following commutative diagram:
		\begin{equation*}
			\xymatrix{\displaystyle\bigsqcup_{[x]\in\pi_0(\mathcal{G})}\mathcal{G}^x_x\ar[r]^{\cong}\ar[d]^(.6){\simeq} &
				\displaystyle\bigsqcup_{[x]\in\pi_0(\mathcal{H})}\mathcal{H}^{T(x)}_{T(x)}\ar[d]^(.6){\simeq}\\
				\mathcal{G}\ar[r]^T&\mathcal{H}.}
		\end{equation*}
		
		Here, we denote $\pi_0(\mathcal{G})$ the set of orbits of $\mathcal{G}$ and we use that $T$
		induces a bijection between~$\pi_0(\mathcal{G})$ and $\pi_0(\mathcal{H})$
		and that a disjoint union of equivalences of groupoids is again an equivalence
		of groupoids.
	\end{proof}
	
	\begin{Lemma}\label{lem:gpd_to_pi1}
		Let $X$ and $Y$ be $G$-CW-complexes and $f\colon X\to Y$ be a cellular
		$G$-map. Then the induced transformation
	$
			f_\# \colon \Pi_1(X;G) \to \Pi_1(Y;G)
	$
		is an equivalence in the sense of Definition~{\rm\ref{def:eq_of_fct}}
		if and only if for each subgroup $H$ of $G$ and for each $x_0\in X^H$
		$f$ induces isomorphisms
		\begin{equation*}
			\pi_j\bigl(f^H\bigr)\colon\ \pi_j(X^H, x_0)\to \pi_j\bigl(Y^H, f(x_0)\bigr) \qquad\text{for} \ j=0,1.
		\end{equation*}
	\end{Lemma}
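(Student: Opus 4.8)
The plan is to reduce everything to a pointwise statement over the orbit category and then translate the groupoid-theoretic notion of equivalence into the homotopy invariants $\pi_0$ and $\pi_1$. First I would unwind Definition~\ref{def:eq_of_fct} applied to $\mathcal{C}=\Orb(G)$: the natural transformation $f_\#$ is an equivalence if and only if, for every object $H$ of $\Orb(G)$, that is, for every subgroup $H$ of $G$, the component
\[
	f_\#(H)\colon\ \Pi_1\bigl(X^H\bigr)|_{X^H_{(0)}}\to \Pi_1\bigl(Y^H\bigr)|_{Y^H_{(0)}}
\]
is an equivalence of groupoids. Thus it suffices, for each fixed $H$, to characterize when $f_\#(H)$ is an equivalence of groupoids in terms of $f^H$.

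For this I would invoke Remark~\ref{rem:grp_equv}(1) together with Lemma~\ref{lem:groupoid_eq}: a morphism of groupoids is an equivalence if and only if it induces a bijection on the sets of orbits and an isomorphism on every isotropy group. It then remains to identify these two invariants for the restricted fundamental groupoid. Since $X$ is a $G$-CW-complex, the fixed set $X^H$ is an ordinary CW-complex with $0$-skeleton $\bigl(X^{(0)}\bigr)^H$, and every path component of $X^H$ contains a $0$-cell; hence the inclusion of the full subgroupoid on the $0$-cells, $\Pi_1\bigl(X^H\bigr)|_{X^H_{(0)}}\hookrightarrow \Pi_1\bigl(X^H\bigr)$, is fully faithful and essentially surjective, so it is an equivalence. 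Consequently the set of orbits of $\Pi_1\bigl(X^H\bigr)|_{X^H_{(0)}}$ is canonically $\pi_0\bigl(X^H\bigr)$, and the isotropy group at a $0$-cell $x_0$ is $\pi_1\bigl(X^H,x_0\bigr)$. Under these identifications $f_\#(H)$ is precisely $\pi_0\bigl(f^H\bigr)$ on orbit sets and $\pi_1\bigl(f^H\bigr)$ on isotropy groups, by naturality of the fundamental groupoid construction.

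Putting these together, $f_\#(H)$ is an equivalence of groupoids if and only if $\pi_0\bigl(f^H\bigr)$ is a bijection and $\pi_1\bigl(f^H\bigr)$ is an isomorphism at every $0$-cell $x_0\in X^H$. Because an isomorphism on $\pi_1$ is a property of the path component and each component contains a $0$-cell, requiring it at all $0$-cells is the same as requiring it at all basepoints $x_0\in X^H$; this is exactly the $j=0,1$ condition in the statement. Quantifying this chain of equivalences over all subgroups $H$ yields both implications of the lemma at once. The only genuinely nontrivial input is the identification of the orbit set of the restricted fundamental groupoid with $\pi_0\bigl(X^H\bigr)$, which rests on the standard fact that the fixed-point set of a $G$-CW-complex is a CW-complex whose $0$-skeleton meets every path component; everything else is a formal translation.
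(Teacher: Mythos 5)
Your proof is correct and follows essentially the same route as the paper: one direction via Remark~\ref{rem:grp_equv}(1), the other via Lemma~\ref{lem:groupoid_eq}, after identifying orbits and isotropy groups of the restricted fundamental groupoid with $\pi_0$ and $\pi_1$ of the fixed-point sets. The only difference is that you explicitly justify the (correct, and tacitly assumed in the paper) point that restricting $\Pi_1\bigl(X^H\bigr)$ to the $0$-skeleton loses nothing because every component of $X^H$ contains a $0$-cell.
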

	\begin{proof}
		First, assume that $f_\#$ is an equivalence. By definition, this means that
		$f^H$ induces an equivalence of groupoids between $\Pi_1\bigl(X^H\bigr)$ and
		$\Pi_1\bigl(Y^H\bigr)$. This, in turn, by Remark~\ref{rem:grp_equv} implies that
		$\pi_j\bigl(f^H\bigr)$ is an isomorphism for $j=0$ and $j=1$.
		The other implication is a special case of Lemma~\ref{lem:groupoid_eq}.
	\end{proof}
	
We now show that the equivariant $R$-groups of $G$-CW-complexes depend only on
	the equivalence class of the fundamental groupoid functor. More precisely, we have the following result.
	
	\begin{Proposition}\label{corol:R_dependence}
		Let $\Pi\colon \Orb(G)\to Groupoids$ be a functor $($thought of as an abstract
		fundamental groupoid functor$)$. Let $X$ be a $G$-CW complex
		and $\Phi\colon \Pi_1(X;G)\to \Pi$ be an equivalence as in Definition~{\rm\ref{def:eq_of_fct}}.
Then for the classifying $G$-map $\varphi\colon X\to B\Pi$ as in Proposition
		{\rm\ref{universalspace}} which induces $\Phi$ on the level of fundamental groupoid
		functors, we have that for $n\ge 6$
		\[
			\varphi_* \colon R_{n}^{\rm spin}(X)^{G} \rightarrow R_{n}^{\rm spin}(B \Pi)^{G}
		\]
		is an isomorphism.
	\end{Proposition}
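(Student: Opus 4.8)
The plan is to deduce the statement directly from Theorem~\ref{G2conn} by checking that the classifying map $\varphi\colon X\to B\Pi$ is $2$-connected. Once that is established, Theorem~\ref{G2conn} immediately gives that $\varphi_*$ is an isomorphism on $R^{\rm spin}_n(\cdot)^G$ for $n\ge 6$, which is exactly the assertion. So the whole argument reduces to verifying a homotopical hypothesis on $\varphi$.

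First I would record that, by Proposition~\ref{universalspace}, the cellular map $\varphi$ satisfies $\varphi_{\#}=\Phi$ as natural transformations $\Pi_1(X;G)\to \Pi_1(B\Pi;G)=\Pi$. Since $\Phi$ is by hypothesis an equivalence in the sense of Definition~\ref{def:eq_of_fct}, so is $\varphi_{\#}$. By Lemma~\ref{lem:gpd_to_pi1}, this is equivalent to the statement that for every subgroup $H$ of $G$ and every $x_0\in X^H$ the restriction $\varphi^H$ induces isomorphisms
\[
\pi_j\bigl(\varphi^H\bigr)\colon\ \pi_j\bigl(X^H,x_0\bigr)\to \pi_j\bigl(B\Pi^H,\varphi(x_0)\bigr)\qquad\text{for}\ j=0,1.
\]

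Next I would upgrade these isomorphisms to $2$-connectedness of $\varphi$. By Definition~\ref{def:connectedness} it suffices to show that each $\varphi^H$ induces a bijection on components, an isomorphism on $\pi_1$, and a surjection on $\pi_2$. The first two are precisely the $j=0,1$ isomorphisms just obtained. For the surjection on $\pi_2$ I would invoke the characteristic property of $B\Pi$, namely that $\pi_j\bigl(B\Pi^H,x\bigr)=0$ for all $j\ge 2$, all subgroups $H$ and all basepoints $x$; in particular the target group $\pi_2\bigl(B\Pi^H\bigr)$ vanishes, so surjectivity holds trivially. Hence $\varphi^H$ is $2$-connected for every $H$, so $\varphi$ is a $2$-connected $G$-map, and applying Theorem~\ref{G2conn} finishes the proof.

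I do not anticipate a genuine obstacle here: all the hard geometric content (the equivariant Morse-theoretic handle decomposition and the equivariant Gromov--Lawson surgery argument) is already packaged inside Theorem~\ref{G2conn}. The only point deserving attention is that the fundamental-groupoid equivalence $\Phi$ directly controls only $\pi_0$ and $\pi_1$; the extra surjectivity on $\pi_2$ required for $2$-connectedness is supplied for free by the asphericity of the fixed point sets of $B\Pi$. This is precisely why the construction of $B\Pi$ with vanishing higher homotopy groups on \emph{all} fixed point sets is the essential input, the equivalence $\Phi$ then handling exactly the low-degree ($\pi_0$, $\pi_1$) part.
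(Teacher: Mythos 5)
Your proposal is correct and follows essentially the same route as the paper: apply Lemma~\ref{lem:gpd_to_pi1} to convert the groupoid equivalence into $\pi_0$- and $\pi_1$-isomorphisms on all fixed point sets, observe that the vanishing of the higher homotopy groups of the fixed point sets of $B\Pi$ makes the required surjection on $\pi_2$ automatic, and then invoke Theorem~\ref{G2conn}. Your write-up merely spells out the intermediate step $\varphi_{\#}=\Phi$ a bit more explicitly than the paper does.
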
	
	\begin{proof}
		By Lemma~\ref{lem:gpd_to_pi1}, the map $f\colon X\to Y$ induces isomorphisms
		of $\pi_0$ and $\pi_1$ for each fixed point set $X^H$ and each
		basepoint. Moreover, as all higher homotopy groups of fixed point sets of
		$B\Pi$ are trivial, this map indeed is $2$-connected
		in the sense of Definition~\ref{def:connectedness}.
		Therefore, the assertion is exactly the one of Theorem~\ref{G2conn}.
	\end{proof}
	
	Using this proposition, we arrive at a slight strengthening of Theorem~\ref{G2conn}.

	\begin{Corollary}\label{corol:R_from_pi1}
		Let $f\colon X\to Y$ be a $G$-map between $G$-CW-complexes such that for
		each subgroup $H$ of $G$ the induced map $f^H\colon X^H\to Y^H$ induces an
		isomorphism \[
\pi_j\bigl(f^H\bigr)\colon\ \pi_j\bigl(X^H,x\bigr)\to \pi_j\bigl(Y^H,f(x)\bigr)\]
 for all $x\in
		X^H$ and $j=0,1$. Then for $n\ge 6$ the induced map
		\[
f_*\colon\ R^{\rm spin}_n(X)^G\to R^{\rm spin}_n(Y)^G\]
		is an isomorphism.
	\end{Corollary}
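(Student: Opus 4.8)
The plan is to deduce this from Proposition~\ref{corol:R_dependence} by factoring $f$ through a universal space, rather than by a direct appeal to Theorem~\ref{G2conn}. The crucial point to notice first is that the hypothesis here is strictly \emph{weaker} than the $2$-connectedness of Theorem~\ref{G2conn}: we are handed isomorphisms on $\pi_0$ and $\pi_1$ of every fixed point set $X^H$, but we are told nothing about $\pi_2$. A genuinely $2$-connected map must in addition be surjective on $\pi_2$ of each $X^H$, and that information is simply not available. So the anticipated main obstacle is exactly this gap, and the whole point of the universal space $B\Pi$ (whose fixed point sets have vanishing higher homotopy) is to supply the missing $\pi_2$-surjectivity \emph{for free}.

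First I would arrange that $f$ is cellular: by Theorem~\ref{cellular} we may replace $f$ by a $G$-homotopic cellular map, and since $G$-homotopic reference maps define the same classes in $R^{\rm spin}_n(-)^G$, this does not change $f_*$. Being cellular, $f$ now induces a natural transformation $f_\#\colon \Pi_1(X;G)\to \Pi_1(Y;G)$ of fundamental groupoid functors. Setting $\Pi:=\Pi_1(Y;G)$, Lemma~\ref{lem:gpd_to_pi1} identifies the hypothesis---that each $f^H$ is an isomorphism on $\pi_0$ and $\pi_1$ for every subgroup $H$ and every basepoint---with the assertion that $f_\#$ is an equivalence in the sense of Definition~\ref{def:eq_of_fct}.

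Now I would apply Proposition~\ref{corol:R_dependence} twice with the same target $B\Pi$: once to $X$ with the equivalence $f_\#\colon \Pi_1(X;G)\to\Pi$, yielding a classifying map $\varphi_X\colon X\to B\Pi$ with $(\varphi_X)_\#=f_\#$; and once to $Y$ with the identity equivalence $\id\colon \Pi_1(Y;G)\to\Pi$, yielding $\varphi_Y\colon Y\to B\Pi$ with $(\varphi_Y)_\#=\id$. By that proposition, both $(\varphi_X)_*$ and $(\varphi_Y)_*$ are isomorphisms on $R^{\rm spin}_n(-)^G$ for $n\ge 6$. The remaining step is a two-out-of-three argument. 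The cellular composite $\varphi_Y\circ f\colon X\to B\Pi$ induces on fundamental groupoid functors the transformation $(\varphi_Y)_\#\circ f_\#=\id\circ f_\#=f_\#$, which is the same transformation that $\varphi_X$ induces. The uniqueness clause of Proposition~\ref{universalspace} therefore forces $\varphi_Y\circ f$ and $\varphi_X$ to be $G$-homotopic, whence $(\varphi_Y)_*\circ f_*=(\varphi_X)_*$ on $R^{\rm spin}_n(-)^G$. Since $(\varphi_X)_*$ and $(\varphi_Y)_*$ are isomorphisms for $n\ge 6$, it follows that $f_*$ is an isomorphism for $n\ge 6$, as claimed.

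To summarize the structure: the hard part is conceptual rather than computational, namely recognizing that the weaker hypothesis on $\pi_0,\pi_1$ cannot feed Theorem~\ref{G2conn} directly and that routing both spaces through $B\Pi$---where higher homotopy vanishes and comparison maps are automatically $2$-connected---together with the uniqueness in Proposition~\ref{universalspace} closes the diagram. Once the cellular reduction and the double application of Proposition~\ref{corol:R_dependence} are set up, the remaining bookkeeping is entirely formal.
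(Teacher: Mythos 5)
Your proof is correct and follows essentially the same route as the paper: both arguments post-compose with the classifying map $u\colon Y\to B\Pi_1(Y;G)$, where the vanishing of higher homotopy of the fixed point sets makes the relevant comparison maps automatically $2$-connected, and then conclude by a two-out-of-three argument. The only cosmetic difference is that the paper directly observes that $u$ and $u\circ f$ are $2$-connected and applies Theorem~\ref{G2conn} to each, whereas you construct $\varphi_X$ abstractly via Proposition~\ref{corol:R_dependence} and identify it with $\varphi_Y\circ f$ using the uniqueness clause of Proposition~\ref{universalspace}.
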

	Note that the assumption on $f$ in the corollary is close to the condition
	to be a $2$-connected, but we do not have any condition on $\pi_2$.
	\begin{proof}
		This improvement relies on the existence of the universal space $B\Pi$. In
		fact, we can post-compose with the classifying map $u\colon Y\to B\Pi_1(Y;G)$
		of Proposition~\ref{universalspace}. Then $u$ and $u\circ f$ both are
		automatically
		$2$-connected and hence both induce isomorphisms between the
		$R$-groups. Then also the third map $f_*$ is an isomorphism.
	\end{proof}
	
\begin{Remark}
		Note that Proposition~\ref{corol:R_dependence} and Corollary
		\ref{corol:R_from_pi1} of course also hold if $G$ is trivial and just state
		that $R_n^{\rm spin}(X)$ depends only on the fundamental group of the CW-complex
		$X$.
	\end{Remark}
	
\subsection*{Acknowledgements}
The authors thank the referees for the careful reading of the paper, improving the presentation and helping to avoid inaccuracies.

\pdfbookmark[1]{References}{ref}
\LastPageEnding

\end{document}